\newcommand{\GL}{\mathrm{GL}}
\newcommand{\SL}{\mathrm{SL}}
\newcommand{\Sp}{\mathrm{Sp}}
\renewcommand{\O}{\mathrm{O}}
\newcommand{\G}{\mathbf{G}}
\renewcommand{\H}{\mathbf{H}}
\renewcommand{\P}{\mathbf{P}}
\newcommand{\X}{\mathbf{X}}
\newcommand{\Hom}{\mathrm{Hom}}
\newcommand{\C}{\mathbb{C}}
\newcommand{\Ff}{\mathcal{F}}
\newcommand{\Gg}{\mathcal{G}}
\renewcommand{\span}[1]{\langle #1\rangle}
\newcommand{\Stab}{\mathrm{Stab}}
\newcommand{\Fl}{\mathrm{Fl}}
\newcommand{\length}[1]{|#1|}
\newcommand{\dims}[1]{\Lambda(#1)}
\numberwithin{equation}{section}
\newtheorem{theorem}{Theorem}[section]
\newtheorem{lemma}[theorem]{Lemma}
\newtheorem{corollary}[theorem]{Corollary}
\newtheorem{proposition}[theorem]{Proposition}
\theoremstyle{definition}
\newtheorem{remark}[theorem]{Remark}
\newtheorem{definition}[theorem]{Definition}
\newtheorem{example}[theorem]{Example}
\newtheorem{problem}[theorem]{Problem}
\begin{document}

\title{Multiple flag ind-varieties with finitely many orbits}

\author{Lucas Fresse}
\address{Universit\'e de Lorraine, CNRS, Institut \'Elie Cartan de Lorraine, UMR 7502, Van\-doeuvre-l\`es-Nancy, F-54506 France}
\email{lucas.fresse@univ-lorraine.fr}
\author{Ivan Penkov}
\address{Jacobs University Bremen, Campus Ring 1, 28759 Bremen, Germany}
\email{i.penkov@jacobs-university.de}

\date{\today}

\begin{abstract}
Let $\G$ be one of the ind-groups $\GL(\infty)$, $\O(\infty)$, $\Sp(\infty)$, and let $\P_1,\ldots,\P_\ell$ be an arbitrary set of $\ell$ splitting parabolic subgroups of $\G$. We determine all such sets with the property that $\G$ acts with finitely many orbits on the ind-variety $\X_1\times\cdots\times\X_\ell$ where $\X_i=\G/\P_i$. In the case of a finite-dimensional classical linear algebraic group $G$, the analogous problem has been solved in a sequence of papers of Littelmann, Magyar--Weyman--Zelevinsky and Matsuki. An essential difference from the finite-dimensional case is that already for $\ell=2$, the condition that $\G$ acts on $\X_1\times \X_2$ with finitely many orbits is a rather restrictive condition on the pair $\P_1,\P_2$. We describe this condition explicitly. Using the description we tackle the most interesting case where $\ell=3$, and present the answer in the form of a table. For $\ell\geq 4$ there always are infinitely many $\G$-orbits on $\X_1\times\cdots\times\X_\ell$.
\end{abstract}

\keywords{Classical ind-groups; ind-varieties; generalized flags; multiple flag varieties}
\subjclass[2010]{14L30; 14M15; 22E65; 22F30}

\maketitle

\section*{Introduction}

The following is a fundamental question in the theory of group actions: given a linear reductive algebraic group $G$, on which direct products $X_1\times X_2\times \cdots\times X_\ell$ of compact $G$-homogeneous spaces does $G$ act with finitely many orbits? The problem is non-trivial only for $\ell>2$, since it is a classical fact that $G$ always acts with finitely many orbits on $X_1\times X_2$ (parabolic Schubert decomposition of a partial flag variety). It has turned out that the problem is most interesting for $\ell=3$, as for $\ell\geq 4$ the group $G$ always acts with infinitely many orbits. 

In the special case where one of the factors is a full flag variety, e.g. $X_1=G/B$, the above problem is equivalent to finding whether there are finitely many $B$-orbits on $X_2\times X_3$; this special case is solved in \cite{Littelmann} and \cite{Stembridge}. In this situation, the theory of spherical varieties is an effective tool. 
In particular, the existence of a dense $B$-orbit is sufficient for ensuring that there are finitely many $B$-orbits.
The problem is also related to studying the complexity
of a direct product of two HV-varieties, i.e. closures of $G$-orbits of highest weight vectors in irreducible $G$-modules; this problem is considered in~\cite{Panyushev}.

If no factor $X_i$ is a full flag variety, the problem is considered in the classical cases in \cite{MWZ1,MWZ2} (types A and C, through the theory of quiver representations) and in \cite{Matsuki1,Matsuki2} (types B and D).
For exceptional groups, the general question has been considered in \cite{BDEK}.

We also mention the works \cite{HNOO} and \cite{NO}, where the authors study double flag varieties of the form $G/P\times K/Q$ with a finite number of $K$-orbits
for a symmetric subgroup $K$ of $G$.
The problem of finitely many $G$-orbits on $X_1\times X_2\times X_3$ is recovered if
$K$ is taken to be the diagonal embedding of $G$ into $G\times G$.

In the present paper we address the above general problem in a natural infinite-dimensional setting. We let $\G$ be one of the classical (or finitary) ind-groups $\GL(\infty)$,  $\O(\infty)$, $\Sp(\infty)$ and ask the same question, where each $\X_i$ is now a locally compact $\G$-homogeneous ind-space. The latter are known as ind-varieties of generalized flags and have been studied in particular in \cite{Dimitrov-Penkov} and  \cite{Fresse-Penkov}; see also \cite{DPW} and the references therein. 

For these ind-varieties our question becomes interesting already for $\ell=2$. Indeed, for which direct products $\X_1\times \X_2$ of ind-varieties of generalized flags does $\G$ act with finitely many orbits on $\X_1\times \X_2$? We prove that this is a quite restrictive property of the ind-variety $\X_1\times\X_2$. More precisely, we show that $\G$ acts with finitely many orbits on $\X_1\times\X_2$ only if the stabilizers $\P_1$ and $\P_2$ of two respective (arbitrary) points on $\X_1$ and $\X_2$ have each only finitely many invariant subspaces in the natural representation $V$ of $\G$. In addition, it is required that the invariant subspaces of one of the groups, say $\P_1$, are only of finite dimension or finite codimension. The precise result is Theorem \ref{T1}, where we introduce adequate terminology: we call the parabolic ind-subgroup $\P_1$ large, and the parabolic ind-subgroup $\P_2$ semilarge.

Having settled the case $\ell=2$ in this way, we saw ourselves strongly motivated to solve the problem for any $\ell\geq 3$. The case $\ell\geq 4$ is settled by a general statement, Lemma \ref{L-key}, claiming roughly that in the direct limit case the number of orbits can only increase. Hence for $\ell\geq 4$ there are infinitely many orbits on $\X_1\times\cdots\times\X_\ell$. 
The case $\ell=3$ is the most intriguing. Here we prove that $\X_1\times\X_2\times\X_3$ has finitely many $\G$-orbits, if and only if the same is true for all products
$\X_1\times \X_2$, $\X_2\times \X_3$ and $\X_1\times \X_3$, and in addition
$\X_1\times \X_2\times \X_3$
can be exhausted by triple flag varieties with finitely many orbits over the corresponding finite-dimensional groups. Those triple flag varieties have been classified by Magyar--Weymann--Zelevinsky for $\SL(n)$ and $\Sp(2n)$ \cite{MWZ1,MWZ2}, and by Matsuki for $\O(2n+1)$ and $\O(2n)$ \cite{Matsuki1,Matsuki2}. In this way, we settle the problem completely for the classical ind-groups $\GL(\infty)$, $\O(\infty)$, $\Sp(\infty)$.


\subsection*{Acknowledgement} We are thankful to Roman Avdeev for pointing out the papers \cite{BDEK} and \cite{Matsuki2} to us, and for some constructive remarks. We thank Alan Huckleberry for a general discussion of the topic of this work. L.F. has been supported in part by ANR project GeoLie (ANR-15-CE40-0012). I.P. has been supported in part by DFG Grant PE 980/7-1.

\section{Statement of main results}

\label{S1}

\subsection{Classical ind-groups}
\label{section-1.1}
The base field is $\C$. Let $V$ be a countable-dimensional vector space. Classical ind-groups are realized as subgroups of the group $\GL(V)$ of linear automorphisms of $V$. We consider three situations to which we refer as {\em types}:
\begin{itemize}
\item[(A)] no additional structure on $V$;
\item[(BD)] $V$ is endowed with a nondegenerate symmetric bilinear form $\omega$;
\item[(C)] $V$ is endowed with a symplectic bilinear form $\omega$.
\end{itemize}
The dual space $V^*=\Hom(V,\C)$ is uncountable dimensional. We fix once and for all a countable-dimensional subspace $V_*\subset V^*$ such that the pairing $$V_*\times V\to\C$$ is nondegenerate:
in type (A) we fix any subspace $V_*\subset V^*$ which satisfies these conditions, while
in types (BD) and (C) (type (BCD), for short) we take $V_*:=\{\omega(v,\cdot):v\in V\}$.

Let $\G$ be one of the classical ind-groups
\begin{eqnarray*}
\GL(\infty) & := & \{\parbox[t]{10.3cm}{$g\in\GL(V):$ $g(V_*)=V_*$ and there are finite-codimensional subspaces of $V$ and $V_*$  fixed pointwise by $g$ $\}$,} \\[2mm]
\O(\infty) & := & \{g\in\GL(\infty):\mbox{$g$ preserves $\omega$}\}\quad \mbox{in type (BD),} \\[2mm]
\Sp(\infty) & := & \{g\in\GL(\infty):\mbox{$g$ preserves $\omega$}\}\quad \mbox{in type (C).}
\end{eqnarray*}

To describe $\G$ as an ind-group, we need to take a basis of $V$.
If $E$ is a basis of $V$, we denote by $E^*=\{\phi_e:e\in E\}\subset V^*$ the dual family of linear forms defined by
\[\phi_e(e')=\left\{\begin{array}{ll} 0 & \mbox{if $e'\in E\setminus\{e\}$,} \\ 1 & \mbox{if $e'=e$.} \end{array}\right. \]
We call $E$ {\em admissible} if, according to type, the following is satisfied:
\begin{itemize}
\item[(A)] the dual family $E^*$ spans the subspace $V_*$;
\item[(BCD)] $E$ is endowed with an involution $i_E:E\to E$, with at most one fixed point, such that $\omega(e,e')\not=0$ if and only if $e'=i_E(e)$.
\end{itemize}
If $E$ is admissible in the sense of type (BCD), then it is a fortiori admissible in the sense of type (A). Note that in type (C), the involution $i_E$ cannot have a fixed point.


We claim now that, for any admissible basis $E$, in type (A) the group $\G=\GL(\infty)$ coincides with the group 
\[
\GL(E):=\{g\in\GL(V):g(e)=e\mbox{ for almost all $e\in E$}\},
\]
where ``almost all'' means ``all but finitely many''.
Indeed, clearly, $\GL(E)$ is a subgroup of $\GL(\infty)$. For the opposite inclusion, consider $g\in\GL(\infty)$.
Since 
$g$ fixes pointwise a finite-codimensional subspace of $V$, there exists a cofinite subset $E'\subset E$ such that 
$g(e')-e'=\sum_{e\in E\setminus E'}x_{e,e'}e\in\span{E\setminus E'}$ 
for all $e'\in E'$. On the other hand,
the fact that $g^{-1}(\phi_e)\in V_*$ implies $x_{e,e'}=0$ for 
all $e\in E\setminus E'$ and almost all $e'$, which shows that $g\in \GL(E)$.

In type (BCD), we have
\[
\G=\{g\in\GL(V):g(e)=e\mbox{ for almost all $e\in E$, and $g$ preserves $\omega$}\}.
\]
If we take any filtration $E=\bigcup_{n\geq 1}E_n$ by finite subsets, such that $E_n$ is $i_E$-stable in type (BCD), 
we get an exhaustion of $\G$,
\[
\G=\bigcup_{n\geq 1}\G(E_n)\quad\mbox{for $\G(E_n):=\G\cap\GL(\langle E_n\rangle)$}.
\]
Here the notation $\langle\cdot\rangle$ stands for the linear span,
and $\GL(\langle E_n\rangle)$ is viewed as a subgroup of $\GL(V)$ in the natural way. 
The subgroups $\G(E_n)$ are finite-dimensional algebraic groups isomorphic to $\GL_m(\C)$, $\O_m(\C)$, or $\Sp_m(\C)$, depending on whether $\G$ is $\GL(\infty)$, $\O(\infty)$, or $\Sp(\infty)$. This exhaustion provides $\G$ with a structure of ind-group, which is independent of the chosen admissible basis. 

\subsection{Splitting Cartan and parabolic subgroups} 
\label{section-1.2}
We call $\H\subset \G$ a {\em splitting Cartan subgroup} if it is the subgroup $\H(E)$ of elements which are diagonal in some admissible basis $E$.

We call $\P\subset\G$ a {\em splitting parabolic subgroup} if it contains a splitting Cartan subgroup $\H=\H(E)$, for some admissible basis $E$, and $\P(E_n):=\P\cap \G(E_n)$ is a parabolic subgroup of $\G(E_n)$ for all $n\geq 1$.

Splitting parabolic subgroups can be fully classified in terms of so-called generalized flags; see Section \ref{section-2.2}. A splitting Borel subgroup is a splitting parabolic subgroup which is minimal (equivalently this is the stabilizer of a generalized flag which is maximal). We consider two types of splitting parabolic subgroups which are not Borel subgroups: 

\begin{definition}
\label{D1}
We say that a splitting parabolic subgroup $\P$ is {\em semilarge} if it has only finitely many invariant subspaces in $V$. This is equivalent to the requirement that $\P$ be the stabilizer in $\G$ of a finite sequence of subspaces
\[
\{F_0=0\subsetneq F_1\subsetneq F_2\subsetneq\ldots\subsetneq F_{m-1}\subsetneq F_m=V\}
\]
(such that $F_k^\perp=F_{m-k}$ in type (BCD)).

We say that $\P$ is {\em large} if, moreover, each subspace $F_k$ is either finite dimensional or finite codimensional.
\end{definition}

Given a splitting parabolic subgroup $\P\subset\G$, the quotient set $\G/\P$ has a structure of ind-variety, which is given by the exhaustion
\[
\G/\P=\bigcup_{n\geq 1}\G(E_n)/\P(E_n).
\]
Each quotient $\G(E_n)/\P(E_n)$ is a flag variety for the group $\G(E_n)$, hence a projective variety. Thus $\G/\P$ is locally projective, but in general it is not projective, i.e. does not admit an embedding as a closed ind-subvariety in the infinite-dimensional projective space $\mathbb{P}^\infty(\C)$. However, if $\P$ is large or semilarge, such an embedding does exist (see \cite[Proposition 7.2]{Dimitrov-Penkov}). It is worth to note that, for any splitting parabolic subgroup $\P$, the ind-variety $\G/\P$ can be realized as an ind-variety of generalized flags; see Section \ref{section-2.3}.

Contrary to the finite-dimensional situation, any two splitting Cartan subgroups of $\G$ do not have to be conjugate;
see Example \ref{E2}. In this paper, a source of difficulty is that we are considering splitting parabolic subgroups which do not a priori have a splitting Cartan subgroup in common, even up to conjugacy. The following characterization of large splitting parabolic subgroups will be useful in this respect.

\begin{proposition}[see Proposition \ref{P1bis}]
\label{P1}
Let $\P\subset\G$ be a splitting parabolic subgroup. The following conditions are equivalent.
\begin{itemize}
\item[\rm (i)] $\P$ is large;
\item[\rm (ii)] For every splitting Cartan subgroup $\H\subset\P$, there is $g\in \G$ such that $g\H g^{-1}\subset\P$.
\end{itemize}
\end{proposition}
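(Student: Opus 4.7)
My plan is to translate everything into the combinatorial language of generalized flags and admissible bases, and then to reduce each direction to a finite-dimensional linear algebra argument in the ``finite part'' of the flag, with compatibility with $\omega$ handled by using the duality $F_k^\perp=F_{m-k}$ in type (BCD). A splitting Cartan subgroup is of the form $\H(E)$ for an admissible basis $E$, and $\H(E)\subset\P$ is equivalent to saying that the generalized flag $\Ff$ stabilized by $\P$ is compatible with $E$, i.e.\ every subspace in $\Ff$ is the span of a subset of $E$. The key elementary observation is that any $g\in\G$ acts as the identity on almost all elements of any admissible basis, so that $g(E)$ and $E$ differ on only finitely many vectors; thus the condition $g\H(E)g^{-1}\subset\P$ amounts to: $\Ff$ is compatible with $g(E)$ for some such finite modification $g$ of $E$.

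For the direction (i)$\Rightarrow$(ii), I assume $\P=\Stab_\G\Ff$ where $\Ff=\{0=F_0\subsetneq F_1\subsetneq\ldots\subsetneq F_m=V\}$ and each $F_k$ is finite-dimensional or finite-codimensional. Given any admissible basis $E$, I want to exhibit $g\in\G$ with $\Ff$ compatible with $g(E)$. I will build $g$ by choosing a sufficiently large finite $i_E$-stable subset $E_n\subset E$ whose span $\langle E_n\rangle$ contains the finite-dimensional $F_k$'s and maps onto $V/F_k$ for each finite-codimensional $F_k$, then applying a finite-dimensional change of basis inside $\G(E_n)$ to align $\langle E_n\rangle$ with the traces of the $F_k$'s; since the flag is finite, finitely many such adjustments suffice. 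In type (BCD), I carry out this finite-dimensional adjustment inside $\O_m(\C)$ or $\Sp_m(\C)$, using $F_k^\perp=F_{m-k}$ to produce an $\omega$-adapted decomposition of $\langle E_n\rangle$.

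For the converse (ii)$\Rightarrow$(i), I argue contrapositively and produce a splitting Cartan $\H(E)$ no conjugate of which is contained in $\P$. If $\P$ has infinitely many invariant subspaces, the generalized flag $\Ff$ stabilized by $\P$ contains an infinite strictly increasing chain with no largest (or no smallest) element inside an infinite-dimensional ``gap'', and I choose $E$ so that this chain is forced to cut $E$ transversally infinitely often. If $\P$ is semilarge but not large, I pick some $F_k\in\Ff$ with both $\dim F_k=\infty$ and $\dim V/F_k=\infty$, fix an infinite-dimensional complement $F'$ of $F_k$, and construct $E$ consisting of vectors $e_i^\pm$ with $e_i^+\in F_k+F'$ having nontrivial components in both $F_k$ and $F'$, arranged so that $E$ is admissible (in type (BCD), I pair these vectors using the involution $i_E$ and arrange $\omega$ on the hyperbolic plane they span). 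Since any $g\in\G$ equals the identity on all but finitely many elements of $E$, the subset $g(E)\cap F_k$ is still finite-codimensional in $g(E)\cap(F_k+F')$ minus a similar cofinite set, and in particular $F_k$ is not the span of any subset of $g(E)$; hence $g\H(E)g^{-1}\not\subset\P$.

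The main obstacle I expect is the construction in the semilarge-but-not-large case: I need an admissible basis $E$ (in particular satisfying the $i_E$-involution and pairing condition in types (BCD)) for which $F_k$ is robustly ``skew'' to every finite modification of $E$. The finite-dimensional adjustments available through $\G$ are local, so once I have set up such a skew $E$, the incompatibility is automatic, but producing $E$ with both the admissibility and the skewness requires careful selection of matched pairs straddling $F_k$; this is where I will need to be most attentive, especially in type (C) where $i_E$ has no fixed point.
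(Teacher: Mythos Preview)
Your plan is essentially correct and closely parallels the paper's proof, which establishes this statement via the more detailed Proposition~\ref{P1bis}. The direction (i)$\Rightarrow$(ii) is handled exactly as you describe: choose a finite $i_E$-stable subset $I\subset E$ large enough that $\langle I\rangle$ contains the finite-dimensional members of $\Ff$ and $\langle E\setminus I\rangle$ lies inside the finite-codimensional one, then replace $I$ by a finite basis adapted to the trace of $\Ff$ on $\langle I\rangle$; this is precisely the paper's argument.

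For the converse, your contrapositive split into the two cases (infinitely many invariant subspaces; a single subspace of infinite dimension and codimension) matches the paper. The difference lies in how you manufacture the ``bad'' Cartan. You aim to build an admissible basis $E$ directly so that the relevant subspace $F$ is never the span of a subset of any finite modification $g(E)$. The paper instead starts from an admissible basis $E$ \emph{compatible} with $\Ff$, extracts a double sequence $\{e_n,e'_n\}$ with each pair separated by some $F\in\Ff$ (and isotropic in type (BCD)), and then forms the new admissible basis $\tilde E$ containing the vectors $e_n\pm e'_n$. The point of this construction (Example~\ref{E2}) is not merely that $F$ fails to be a span of a subset of $\tilde E$, but that $\H(\tilde E)$ contains, for each $n$, an element swapping $e_n\leftrightarrow e'_n$; since any $g\in\G$ fixes all but finitely many $e_n,e'_n$, some such swap survives conjugation and visibly moves $\Ff$. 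This swap argument handles both cases uniformly and makes the admissibility of $\tilde E$ (and the $i_E$-pairing in type (BCD)) completely explicit.

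Your direct approach also works in the semilarge-but-not-large case, by the argument you sketch: if infinitely many basis vectors straddle $F_k$, then $g(E)\cap F_k$ is finite for every $g\in\G$, while $F_k$ is infinite dimensional, so $F_k$ cannot be the span of any subset of $g(E)$. Your treatment of the infinite-chain case, however, is too vague: ``choose $E$ so that this chain cuts $E$ transversally infinitely often'' does not specify a construction, and you would still need to arrange that \emph{some} member of the chain fails to be a span of a subset of $g(E)$ for \emph{every} $g$. Here the paper's swap construction is genuinely cleaner and more uniform; I recommend you adopt it, since it also resolves the admissibility and (BCD) pairing issues you correctly anticipate as the main obstacle.
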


The proof is given in Section \ref{section-3}.

\subsection{Main results} 
We consider a product of ind-varieties of the form
\begin{equation}
\label{X}
\X=\G/\P_1\times\cdots\times\G/\P_\ell
\end{equation}
where $\P_1,\ldots,\P_\ell\subsetneq\G$ are splitting parabolic subgroups of $\G$. 
The ind-variety $\X$ is equipped with the diagonal action of $\G$. Our purpose is to solve the following problem:

\begin{problem}
\label{Problem1}
Characterize all $\ell$-tuples $(\P_1,\ldots,\P_\ell)$ such that $\X$ has a finite number of $\G$-orbits.
\end{problem}

Of course if $\ell=1$, then $\X$ has only one $\G$-orbit. 
If $\ell=2$, the number of orbits in $\X$ is infinite in general, and our first main result claims the following.

\begin{theorem}
\label{T1}
If $\ell=2$, then $\X$ (of (\ref{X})) has a finite number of $\G$-orbits if and only if
one of the subgroups $\P_1,\P_2$ is large and the other one is semilarge.
\end{theorem}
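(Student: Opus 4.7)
\emph{Proof plan.} Suppose first that $\P_1$ is large and $\P_2$ is semilarge. Write $\P_i=\Stab(\Ff_i)$ for finite generalized flags $\Ff_1=\{0=F_0\subsetneq\cdots\subsetneq F_m=V\}$ with each $F_j$ either finite-dimensional or finite-codimensional, and $\Ff_2=\{0=G_0\subsetneq\cdots\subsetneq G_n=V\}$. To a point $x=(\Ff'_1,\Ff'_2)\in\X$ I attach the matrix
$$\mu_{j,k}(x):=\begin{cases}\dim(F'_j\cap G'_k) & \text{if } F_j \text{ is finite-dimensional,} \\ \dim V/(F'_j+G'_k) & \text{if } F_j \text{ is finite-codimensional.}\end{cases}$$
Its entries are non-negative integers bounded by $\dim F_j$ or $\mathrm{codim}\,F_j$, so $\mu$ is a $\G$-invariant taking only finitely many values. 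The key step is \emph{completeness}: any two pairs with the same $\mu$ lie in the same $\G$-orbit. I would prove this by choosing a common admissible basis of $V$ simultaneously adapted to both pairs of flags, after which a suitable $g\in\G(E_n)$ mapping one pair to the other is produced by classical finite-dimensional linear algebra inside the parabolic $\G(E_n)/\P_1(E_n)\times\G(E_n)/\P_2(E_n)$.

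For the converse I argue by contrapositive in two steps. (a) If some $\P_i$ is not semilarge---say $\Ff_2$ has infinitely many subspaces---fix a nontrivial $F\in\Ff_1$ (which exists since $\P_1\neq\G$). When $F$ is finite-dimensional of dimension $d$, the monotone function $G'\mapsto\dim(F\cap G')$ on the infinite chain $\Ff'_2$ is a $\G$-orbit invariant valued in $\{0,1,\dots,d\}$, whose distinct jump-patterns yield infinitely many orbits, each realized by moving $\Ff_2$ with an element of some $\G(E_n)$. The finite-codimensional case is dual, and the remaining case (neither finite-dimensional nor finite-codimensional) is handled by passing to a suitable finite-dimensional subquotient where the same kind of invariant applies. (b) If both $\P_i$ are semilarge but neither is large, each $\Ff_i$ contains a subspace of both countably infinite dimension and infinite codimension; for two such subspaces $F\in\Ff_1$, $G\in\Ff_2$, a direct construction produces pairs $(\Ff'_1,\Ff'_2)$ in $\X$ realizing every value in $\{0,1,2,\ldots\}\cup\{\aleph_0\}$ for $\dim(F'\cap G')$, again producing infinitely many orbits.

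The principal obstacle is the completeness step in the sufficiency direction. In infinite dimensions two points of $\X$ need not admit a common splitting Cartan subgroup of $\G$, so a naive finite-dimensional Witt-extension argument does not apply directly. Proposition~\ref{P1} is the decisive tool: the largeness of $\P_1$ guarantees that any splitting Cartan subgroup of $\P_2$ can be conjugated into $\P_1$, which is precisely what permits the choice of a common admissible basis diagonalizing all four relevant flags and thereby reduces the construction of the intertwining element to a single finite-dimensional subgroup $\G(E_n)$, where $\mu$ is indeed known to be a complete invariant by the classical parabolic Schubert decomposition.
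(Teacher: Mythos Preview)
Your sufficiency argument is correct and in fact simpler than you realize: the invariant $\mu$ is $\G$-invariant, and since $\P_1$ is large and $\P_2$ is semilarge, the matrix $\mu$ has finitely many entries each bounded by some $\dim F_j$ or $\mathrm{codim}\,F_j$, hence $\mu$ takes only finitely many values. This alone gives $|\X/\G|<\infty$; completeness of $\mu$ is \emph{not needed}, so your ``principal obstacle'' evaporates. This is genuinely more direct than the paper's route, which constructs an exhaustion $\X=\bigcup_n\X(n)$ (using Proposition~\ref{P1} to align $\Ff_1,\Ff_2$ to a common basis), proves a nontrivial Key Lemma that $\X(n)/\G(n)\hookrightarrow\X/\G$, and then bounds the finite-dimensional orbit counts by a symmetric-group double-coset argument. (Should you still want completeness, note that a single admissible basis compatible with all four flags is not available in general; the correct mechanism is that once the \emph{representatives} $\Ff_1,\Ff_2$ are $E$-compatible, every point of $\X$ already lies in some $\X(n)$, where $\mu$ coincides with the classical Bruhat invariant.)

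The necessity direction, by contrast, has real gaps. In your case~(b) the proposed invariant $\dim(F'\cap G')$ can fail entirely: if the chosen infinite-dimensional, infinite-codimensional subspaces $F\in\Ff_1$ and $G\in\Ff_2$ happen to be commensurable (nothing in the hypotheses excludes this, e.g.\ $\P_1=\P_2$), then $\dim(F'\cap G')=\infty$ for every pair in $\X$ and the invariant is constant. The paper does not use a numerical invariant here; it builds explicit strictly monotone chains $F_1\cap F^{(0)}\supsetneq F_1\cap F^{(1)}\supsetneq\cdots$ of subspaces $E$-commensurable with $F_2$ (via Lemmas~\ref{L4-T1} and~\ref{lemma-2-T1}), and then invokes the fact that no $g\in\G$ can map a subspace onto a strictly larger one (Lemma~\ref{L-new}) to conclude these lie in distinct $\P_1$-orbits. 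In your case~(a), the subcase where the chosen $F\in\Ff_1$ is neither finite-dimensional nor finite-codimensional is dismissed with ``a suitable finite-dimensional subquotient'' and no further detail; and throughout the necessity argument you assert that the required invariant values are ``realized by moving $\Ff_2$ with an element of some $\G(E_n)$'' without checking the essential constraint that the resulting flag remain $E$-commensurable with $\Ff_2$, which is precisely what membership in $\X_2$ demands and is where most of the work in Section~\ref{section-5} goes.
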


\begin{corollary}
Let $\P$ be a splitting parabolic subgroup of $\G$.\ Then the ind-variety $\G/\P$ has a finite number of $\P$-orbits if and only if $\P$ is large.
\end{corollary}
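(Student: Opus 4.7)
The plan is to deduce the corollary directly from Theorem~\ref{T1} applied to the diagonal case $\P_1=\P_2=\P$, together with the standard bijection between $\G$-orbits on a double homogeneous space and $\P$-orbits on a single homogeneous space.

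First I would observe that, exactly as in the finite-dimensional setting, the map
\[
(g_1\P,g_2\P)\mapsto \P\text{-orbit of }g_1^{-1}g_2\P
\]
sets up a bijection between the $\G$-orbits on $\G/\P\times\G/\P$ (for the diagonal action) and the $\P$-orbits on $\G/\P$ (for left multiplication). This bijection is purely set-theoretic and goes through verbatim in the ind-setting; one only needs to check that the orbit of $g_1^{-1}g_2$ depends on $(g_1\P,g_2\P)$ only up to the diagonal $\G$-action, which is immediate.

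Next I would apply Theorem~\ref{T1} to $\X=\G/\P\times\G/\P$. Theorem~\ref{T1} asserts that $\X$ has finitely many $\G$-orbits if and only if one of the two parabolics is large and the other is semilarge. Since the two parabolics coincide here, this condition becomes: $\P$ is both large and semilarge. However, Definition~\ref{D1} makes clear that every large splitting parabolic is automatically semilarge (large is a strengthening of semilarge by an extra finiteness condition on the subspaces $F_k$). Hence the combined condition collapses to: $\P$ is large.

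Combining the two steps gives the equivalence of the corollary. No serious obstacle arises; the only point requiring a moment's care is that the orbit-correspondence argument is a formal fact about transitive group actions and carries over to ind-groups acting on ind-varieties without any topological subtlety, so once Theorem~\ref{T1} is in hand the corollary is essentially a tautology.
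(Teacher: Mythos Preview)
Your proof is correct and matches the paper's intended argument: the paper states the corollary immediately after Theorem~\ref{T1} without separate proof, relying on exactly the orbit bijection $\P_1\cdot(g\P_2)\leftrightarrow\G\cdot(\P_1,g\P_2)$ that it spells out later in Section~\ref{section-5}. Specializing to $\P_1=\P_2=\P$ and noting that large implies semilarge is precisely how the corollary is meant to follow.
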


Next we consider the case $\ell=3$, i.e.
\[\X=\G/\P_1\times \G/\P_2\times \G/\P_3.\]
By Theorem \ref{T1},
if $\X$ has a finite number of $\G$-orbits then all three splitting parabolic subgroups $\P_1,\P_2,\P_3$ are semilarge and at least two of them are large.
Moreover, it follows from Proposition \ref{P1} that, up to replacing the parabolic subgroups by conjugates, there is no loss of generality in assuming that $\P_1,\P_2$, and $\P_3$ contain the same splitting Cartan subgroup $\H=\H(E)$ for some admissible basis $E$.
 This assumption guarantees that the construction of Section \ref{section-1.2} can be done simultaneously for each factor $\G/\P_i$ ($i\in\{1,2,3\}$). Hence,
 by considering a filtration $E=\bigcup_n E_n$ as in Section \ref{section-1.2}, we obtain an exhaustion 
\begin{equation}
\label{X-exhaustion}
\X=\bigcup_{n\geq 1}\X(E_n)
\end{equation}
where
$\X(E_n):=\prod_{i=1}^3 \G(E_n)/\P_i(E_n)$ is a triple flag variety for the group $\G(E_n)$. See Section \ref{exhaustion} for more details. 

Our main result regarding the case $\ell=3$ can be stated as follows.

\begin{theorem} 
\label{T2}
If $\ell=3$, then $\X$ can have a finite number of $\G$-orbits only if 
the splitting parabolic subgroups $\P_1,\P_2,\P_3$ are semilarge and at least two of them are large.
Moreover, in this situation, $\X$ has a finite number of $\G$-orbits if and only if, for all $n$, the finite-dimensional triple flag variety $\X(E_n)$ has a finite number of $\G(E_n)$-orbits.

Finally, $\X$ has a finite number of $\G$-orbits if and only if
the triple $(\P_1,\P_2,\P_3)$ appears, up to permutation, in Table~\ref{table1}.
\end{theorem}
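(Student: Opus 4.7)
The plan is to decompose the theorem into its three assertions (necessity of the semilarge/two-large condition; equivalence with finiteness on each finite-dimensional exhaustion piece; and explicit classification via Table~\ref{table1}) and tackle each in turn, leaning on Theorem~\ref{T1}, on Proposition~\ref{P1}, on the key lemma (Lemma~\ref{L-key}), and on the finite-dimensional classifications of \cite{MWZ1,MWZ2,Matsuki1,Matsuki2}.

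For the necessity of the parabolic conditions, I would consider the three $\G$-equivariant projections $\pi_{ij}\colon\X\to\G/\P_i\times\G/\P_j$ for $\{i,j\}\subset\{1,2,3\}$. Each $\pi_{ij}$ is surjective, so finitely many $\G$-orbits on $\X$ forces finitely many $\G$-orbits on every $\G/\P_i\times\G/\P_j$. Theorem~\ref{T1} then says, for each such pair, that one of $\P_i,\P_j$ is large and the other is semilarge. Since largeness implies semilargeness, all three subgroups are semilarge. Moreover, if only one of $\P_1,\P_2,\P_3$ were large, the pair omitting that one would contradict Theorem~\ref{T1}; hence at least two of $\P_1,\P_2,\P_3$ are large. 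This gives the first assertion, and by Proposition~\ref{P1} we may then conjugate so that $\P_1,\P_2,\P_3$ share a common splitting Cartan subgroup $\H(E)$, legitimizing the simultaneous exhaustion \eqref{X-exhaustion}.

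Turning to the equivalence with finiteness of $\G(E_n)$-orbits on each $\X(E_n)$, the easy direction is that a counterexample at some finite level obstructs finiteness in the limit: if $\X(E_n)$ has infinitely many $\G(E_n)$-orbits for some $n$, then Lemma~\ref{L-key} supplies infinitely many $\G$-orbits on $\X$. For the converse, assuming two of the $\P_i$ are large, a $\G$-orbit on $\X$ can be encoded by a finite list of numerical invariants (dimensions of intersections of subspaces of the flags, together with signatures/discriminants of the restricted form in type (BCD)). The largeness hypothesis is exactly what forces these invariants to take values in a finite set once finiteness holds at every level: at each $\X(E_n)$ the invariants classify the $\G(E_n)$-orbits by MWZ/Matsuki, and the largeness ensures that the ``infinite'' part of each flag contributes only through finitely many stable values (finite dimension or finite codimension), so orbits cannot proliferate in passing to the limit. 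This reduces the question to the classification at each finite level.

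Finally, to obtain Table~\ref{table1} itself, I would translate the semilarge/large data of $(\P_1,\P_2,\P_3)$ into the combinatorial data parameterizing partial flag varieties for $\G(E_n)$ for large~$n$ (dimension vectors of the generalized flags, together with the type~(BCD) constraint $F_k^\perp=F_{m-k}$) and then read off which triples survive in the Magyar--Weyman--Zelevinsky lists for types A and C and in the Matsuki lists for types B and D. The output of this matching, with duplicates removed up to permutation, is exactly Table~\ref{table1}. The main obstacle in the entire argument is the converse direction of the second assertion: verifying that finiteness of each $\G(E_n)$-orbit count at level~$n$ does indeed imply finiteness of $\G$-orbits in the limit. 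Without the assumption that two of the $\P_i$ are large, the number of orbits at finite levels could grow unboundedly with $n$; the argument must use the finite/finite-codimensional restriction of the large hypothesis to show that for $n$ large enough, new orbits at level $n+1$ all come from orbits already present at level $n$ once we permit the additional $\G$-action, so that the total number of $\G$-orbits on $\X$ stabilizes.
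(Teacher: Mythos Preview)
Your treatment of the first assertion (necessity via the three projections and Theorem~\ref{T1}) and of the third assertion (reading off Table~\ref{table1} from \cite{MWZ1,MWZ2,Matsuki1,Matsuki2}) matches the paper and is fine. The direct implication in the second assertion via Lemma~\ref{L-key} is also correct.

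The genuine gap is the converse direction of the second assertion: supposing each $\X(E_n)$ has finitely many $\G(E_n)$-orbits, conclude that $\X$ has finitely many $\G$-orbits. Your proposal to ``encode a $\G$-orbit on $\X$ by a finite list of numerical invariants'' and then argue that largeness forces these invariants into a finite set is not a proof. In the ind-setting it is not established anywhere that $\G$-orbits on a triple product are classified by such invariants, and even if they were, nothing you have written bounds the number of values they can take; the hypothesis only says that each finite level has finitely many orbits, and that number may well grow with $n$. Your final paragraph correctly identifies this as the crux but only restates the desired conclusion (``orbits cannot proliferate in passing to the limit'') without supplying a mechanism.

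The paper's argument is structurally different from what you sketch. It does not attempt to classify or bound orbits by invariants, nor does it try to show that the orbit count on $\X(E_n)$ stabilizes. Instead it first uses Theorem~\ref{T1} to reduce to showing that, for each $\G$-orbit $\mathbb{O}\subset\X_2\times\X_3$ with chosen basepoint $(\Ff_2,\Ff_3)$, the stabilizer $\mathbf{S}=\Stab_\G(\Ff_2)\cap\Stab_\G(\Ff_3)$ has finitely many orbits on $\X_1$. The key step is then to prove that there exists a single $n_1$ such that every $\Ff\in\X_1$ can be moved by an element of $\mathbf{S}$ into the finite-dimensional piece $\X_1(n_1)$; finiteness then follows from the hypothesis at level $n_1$. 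This step requires two auxiliary lemmas specific to the large/semilarge dichotomy: for a large $\P$, the ``tail'' subgroup $\G(V'_{n_0})$ is eventually contained in $\P$; and for a semilarge $\P$ containing $\H(E)$, any subspace of bounded dimension can be pushed by $\P$ into a fixed $V_{n_1}$. These are the ingredients missing from your outline, and without them the converse implication does not go through.
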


In Table \ref{table1}, we use the following notation for a semilarge parabolic subgroup $\P$
obtained as the stabilizer of a finite chain of subspaces $\{F_0=0\subsetneq F_1\subsetneq\ldots\subsetneq F_m=V\}$ as in Definition \ref{D1}. 
We set $\length{\P}=m$
and denote by $\dims{\P}$ the list of values $\dim F_k/F_{k-1}$ ($k=1,\ldots,m$) written in nonincreasing order; in case of $j$ repetitions of the same value $a$, we write $a^j$. Note that this list always starts with $\infty$, and $\P$ is large if and only if there is a unique occurrence of $\infty$ in the list.

\begin{table}[h]

$\GL(\infty)$ case:

{\scriptsize \begin{tabular}{|c|c|c|c|}
\hline
$\P_1$ & $\P_2$ & $\P_3$ & additional condition \\
\hline\hline
$\length{\P_1}=2$ & $\length{\P_2}=2$ & semilarge & two of $\P_1,\P_2,\P_3$ are large \\
\hline 
$\length{\P_1}=2$ & $\length{\P_2}=3$ & $3\leq \length{\P_3}\leq 5$ & two of $\P_1,\P_2,\P_3$ are large \\
\hline
$\dims{\P_1}=(\infty,2)$ & $\length{\P_2}=3$ & semilarge & $\P_2$ or $\P_3$ is large \\
\hline
$\length{\P_1}=2$ & $\length{\P_2}=3$, $1\in\dims{\P_2}$ & semilarge & two of $\P_1,\P_2,\P_3$ are large \\
\hline
$\dims{\P_1}=(\infty,1)$ & large & semilarge & no additional condition \\ \hline

\end{tabular}}

\bigskip
$\Sp(\infty)$ case:

{\scriptsize \begin{tabular}{|c|c|c|c|}
\hline
$\P_1$ & $\P_2$ & $\P_3$ \\
\hline\hline
$\length{\P_1}=2$ & $\length{\P_2}=3$, large & $\length{\P_3}\in\{3,5\}$, large \\
\hline 
$\length{\P_1}=2$ & $\dims{\P_2}=(\infty,1^2)$ & large  \\
\hline
$\dims{\P_1}=(\infty,1^2)$ & $\length{\P_2}=3$ & large  \\ \hline
$\dims{\P_1}=(\infty,1^2)$ & $\length{\P_2}=3$, large & semilarge  \\ \hline
\end{tabular}}

\bigskip
$\O(\infty)$ case:

{\scriptsize \begin{tabular}{|c|c|c|c|}
\hline
$\P_1$ & $\P_2$ & $\P_3$ \\
\hline\hline
$\length{\P_1}=2$ & $\dims{\P_2}=(\infty,b^2)$, $b\leq3$ & large \\
\hline 
$\length{\P_1}=2$ & $\dims{\P_2}=(\infty,1^4)$ & large  \\
\hline
$\length{\P_1}=2$ & $\length{\P_2}=3$, large & $\length{\P_3}\in\{3,5\}$, large  \\ \hline
$\length{\P_1}=2$ & $\length{\P_2}=3$, large & $\dims{\P_3}=(\infty,c^2,1^4)$, $c<\infty$  \\ \hline
$\length{\P_1}=2$ & $\length{\P_2}=3$, large & $\dims{\P_3}=(\infty,1^8)$ \\ \hline
$\dims{\P_1}=(\infty,b^2)$, $b<\infty$ & $\dims{\P_2}=(\infty,\infty,1)$ & $\length{\P_3}\in\{3,5\}$, large \\ \hline
$\dims{\P_1}=(\infty,1^2)$ & $\length{\P_2}=3$, large & semilarge  \\ \hline
$\dims{\P_1}=(\infty,1^2)$ & $\length{\P_2}\in\{3,4\}$ & large \\ \hline
\end{tabular}}

\bigskip

\caption{Classification of triples $(\P_1,\P_2,\P_3)$, up to permutation, such that $\G/\P_1\times\G/\P_2\times\G/\P_3$ is of finite type.}
\label{table1}
\end{table}

\bigskip

Finally, it is not surprising that,  like in the case of finite-dimensional multiple flag varieties, the following holds.

\begin{theorem}
\label{T3}
If $\ell\geq 4$, then $\X$ has an infinite number of $\G$-orbits.
\end{theorem}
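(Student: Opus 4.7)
The plan is to reduce Theorem~\ref{T3} to the classical finite-dimensional statement of Magyar--Weyman--Zelevinsky and Matsuki via the exhaustion $\X=\bigcup_n\X(E_n)$ together with the orbit-counting inequality supplied by Lemma~\ref{L-key}.

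Assume for contradiction that $\X$ has finitely many $\G$-orbits. Projecting $\X$ onto any two factors gives finitely many orbits on $\G/\P_i\times\G/\P_j$ for every $i\neq j$, so Theorem~\ref{T1} forces each such pair to consist of one large and one semilarge subgroup. Consequently every $\P_i$ is semilarge, and at most one of them (say $\P_\ell$) can fail to be large. Choose a splitting Cartan $\H\subset\P_\ell$. Applying Proposition~\ref{P1} to each large $\P_i$ ($i<\ell$), we find $g_i\in\G$ with $g_i\H g_i^{-1}\subset\P_i$. Since $\G/\P_i$ and $\G/(g_i^{-1}\P_i g_i)$ are $\G$-equivariantly isomorphic, we may replace each $\P_i$ by a suitable conjugate and assume that all $\P_i$ contain the common splitting Cartan $\H=\H(E)$.

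Fix a filtration $E=\bigcup_n E_n$ by finite $i_E$-stable subsets. As in Section~\ref{exhaustion} (now applied with $\ell$ factors in place of three), this yields $\X=\bigcup_n \X(E_n)$ with $\X(E_n)=\prod_{i=1}^\ell \G(E_n)/\P_i(E_n)$ a finite-dimensional multiple flag variety for the classical group $\G(E_n)$. Lemma~\ref{L-key} delivers
\[
|\G\backslash\X|\geq |\G(E_n)\backslash\X(E_n)|\quad\text{for every }n.
\]
For $n$ large enough the defining chain of each (semilarge, proper) $\P_i$ intersects $\langle E_n\rangle$ in a non-trivial chain, so each factor $\G(E_n)/\P_i(E_n)$ is a non-trivial flag variety. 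Since $\ell\geq 4$, the finite-type classification of multiple flag varieties for $\SL_m(\C)$, $\Sp_{2m}(\C)$, and $\O_m(\C)$ established in \cite{MWZ1,MWZ2,Matsuki1,Matsuki2} then guarantees that $\G(E_n)$ has infinitely many orbits on $\X(E_n)$, contradicting $|\G\backslash\X|<\infty$.

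The main obstacle is the content of Lemma~\ref{L-key} itself, namely the fact that distinct $\G(E_n)$-orbits on $\X(E_n)$ do not merge into a single $\G$-orbit under the enlargement $\G(E_n)\hookrightarrow\G$; this is what converts a purely finite-dimensional count into an infinite-orbit statement in the ind-limit. The remaining ingredients---the shared-Cartan reduction via Proposition~\ref{P1} and the eventual non-triviality of the finite-dimensional factors---are routine once Theorem~\ref{T1} is in hand.
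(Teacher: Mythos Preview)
Your argument is correct and follows essentially the same route as the paper's own proof: reduce via Theorem~\ref{T1} to the situation where $\P_1,\ldots,\P_{\ell-1}$ are large (so assumption~(\ref{all-but-one}) holds), use Proposition~\ref{P1} to arrange a common splitting Cartan subgroup, build the exhaustion of Section~\ref{exhaustion}, and then combine Lemma~\ref{L-key} with the finite-dimensional classifications of \cite{MWZ1,MWZ2,Matsuki1,Matsuki2}. The only cosmetic differences are that you phrase the argument by contradiction and spell out the conjugation step and the eventual non-triviality of the factors $\G(E_n)/\P_i(E_n)$ more explicitly than the paper does.
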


The rest of the paper is structured as follows.
In Section \ref{S2} we summarise some existing results on the classical ind-groups and their homogeneous ind-varieties.  
In Section \ref{section-3} we show the characterization of large parabolic subgroups stated in Proposition \ref{P1}.
In Section \ref{section-4} we explain the construction of the exhaustion of (\ref{X-exhaustion})
 in more detail, and prove Lemma \ref{L-key}
which claims that whenever the multiple ind-variety $\X$ of (\ref{X}) has an exhaustion as in (\ref{X-exhaustion}),
we get an embedding of orbit sets
\[
\X(E_n)/\G(E_n)\hookrightarrow \X/\G.
\]
This lemma plays a key role in the proof of our main results.
Theorem \ref{T1} is proved in Sections \ref{section-5}--\ref{section-6}. 
Theorem \ref{T2} is proved in Section \ref{section-7}, except for the verification of Table \ref{table1} which is done in Appendix \ref{appendix}.
Finally the proof of Theorem \ref{T3} appears in the very short Section~\ref{section-8}.

\section{Preliminaries on admissible bases and generalized flags}

\label{S2}


\subsection{Splitting Cartan subgroups and admissible bases}

\label{section-2.1}

We refer to Sections \ref{section-1.1} and \ref{section-1.2} for the definitions of admissible basis and splitting Cartan subgroup.
Note that the group $\G$ acts on the set of admissible bases. Moreover, if $E$ is an admissible basis of $V$ then \[
\H(g(E))=g\H(E)g^{-1},
\]
hence $\G$ acts by conjugation on the set of splitting Cartan subgroups.

\begin{lemma}
\label{L2.1}
Let $E,E'$ be two admissible bases of $V$ which differ by finitely many vectors, that is, 
\[E=E_0\sqcup I\quad\mbox{and}\quad E'=E_0\sqcup I'\]
where $I,I'$ are finite sets. Then the splitting Cartan subgroups $\H(E)$ and $\H(E')$ are conjugate. 

Conversely, if $\H$ and $\H'$ are two conjugate splitting Cartan subgroups,
then there are admissible bases $E$ and $E'$ which differ by finitely many vectors such that $\H=\H(E)$ and $\H'=\H(E')$.
\end{lemma}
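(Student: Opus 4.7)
The plan is to prove both directions by explicit construction. For the forward direction the goal will be to exhibit $g \in \G$ with $g(E) = E'$ (or more generally with $g(E)$ and $E'$ spanning the same set of lines in $V$), since then $g\H(E)g^{-1} = \H(g(E)) = \H(E')$. In type (A), $|I| = |I'|$ equals the codimension of $\langle E_0\rangle$ in $V$, so I would pick any bijection $\sigma : I \to I'$ and define $g$ as the identity on $\langle E_0\rangle$ and as $\sigma$ (extended linearly) on $\langle I\rangle$. Then $g(E) = E'$ and $g$ fixes the finite-codimensional subspace $\langle E_0\rangle$ pointwise; moreover, under the contragredient action $g$ sends $E^*$ bijectively to $(E')^*$, and since both span $V_*$ by admissibility this gives $g(V_*) = V_*$, so $g \in \GL(\infty)$.

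In type (BCD), the constraint that $g$ preserves $\omega$ forces a preliminary reduction to the case where $E_0$ is stable under both $i_E$ and $i_{E'}$. The key observation will be the following compatibility property: for $e \in E_0$, if $i_E(e)$ and $i_{E'}(e)$ both lie in $E_0$, then they agree, because each is the unique $\omega$-partner of $e$ in an admissible basis and both land in $E \cap E'$. Consequently the graph on $E_0$ whose edges are the $i_E$- and $i_{E'}$-pairs contained in $E_0$ has every vertex of degree at most one, so its connected components have size at most two. Starting from the finite set $R_0 := \{e \in E_0 : i_E(e) \in I\} \cup \{e \in E_0 : i_{E'}(e) \in I'\}$ (finite by injectivity of $i_E, i_{E'}$) and closing under graph adjacency will therefore add only finitely many further vertices, producing a finite set $R \supseteq R_0$. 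Setting $\tilde E_0 := E_0 \setminus R$, $\tilde I := E \setminus \tilde E_0$ and $\tilde I' := E' \setminus \tilde E_0$ then yields decompositions in which $\tilde E_0$ is $i_E$- and $i_{E'}$-stable. Consequently $\langle \tilde E_0\rangle$ is $\omega$-nondegenerate, the finite-dimensional subspace $W := \langle \tilde E_0\rangle^\perp = \langle \tilde I\rangle = \langle \tilde I'\rangle$ is $\omega$-nondegenerate, and $\tilde I, \tilde I'$ are admissible bases of $(W, \omega|_W)$. By the classical transitivity of $\O(W)$ or $\Sp(W)$ on admissible bases (up to rescalings, which are absorbed into the Cartan and do not affect the line each vector spans), I would obtain $g_W$ such that $g_W(\tilde I)$ spans the same lines as $\tilde I'$; extending $g_W$ by the identity on $\langle \tilde E_0\rangle$ then produces the desired $g \in \G$.

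For the converse, I would start with $g \in \G$ conjugating $\H$ to $\H'$, write $\H = \H(E)$ for some admissible basis $E$, and set $E' := g(E)$. Admissibility is preserved by the $\G$-action (using the contragredient identity in type (A), and the pushforward involution $i_{E'} := g\, i_E\, g^{-1}$ in type (BCD), which works because $g$ preserves $\omega$), so $E'$ is admissible and $\H(E') = \H(g(E)) = g\H(E)g^{-1} = \H'$. Moreover, by the description $\GL(\infty) = \GL(E)$ (and its BCD analogue) established in Section~\ref{section-1.1}, $g$ fixes a cofinite subset $E_0 \subseteq E$ pointwise, so $E_0 \subseteq E \cap E'$ and $E, E'$ differ by finitely many vectors. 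The main obstacle will be the BCD reduction in the forward direction: the graph-closure process must terminate in finitely many steps, and this rests entirely on the compatibility lemma forcing the graph to have maximum degree one. Once that is in place, the rest reduces either to the explicit formula of type (A) or to standard finite-dimensional transitivity for $\O$ and $\Sp$.
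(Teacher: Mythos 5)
Your proof is correct and follows essentially the same route as the paper: reduce to an explicit element $g\in\G$ fixing a common cofinite subset and permuting the finite remainders, with a rescaling argument in type (BCD), and obtain the converse from $\H'=\H(g(E))$. Your graph-closure argument carefully justifies the step the paper dispatches in one line (``up to considering larger $I$ and $I'$, we may assume they are stable under the involutions''), and your appeal to finite-dimensional transitivity of $\O(W)$, $\Sp(W)$ on rescaled admissible bases is exactly the content of the paper's explicit pairing construction.
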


\begin{proof} 
First, we note that $I$ and $I'$ have the same cardinality, equal to the codimension of $\span{E_0}$ in $V$.
In type (A) we take $g\in\GL(V)$ such that $g(I)=I'$ and $g(e)=e$ for all $e\in E_0$, hence $g(E)=E'$. This element $g$ actually belongs to $\GL(\infty)=\G$, and we get $\H(E')=\H(g(E))=g\H(E)g^{-1}$.

In type (BCD), 
up to considering larger $I$ and $I'$ if necessary, we may assume that $I$ and $I'$ are stable by the involutions $i_E$ and $i_{E'}$, respectively. Since $I$ and $I'$ have the same cardinality and the involutions $i_E$ and $i_{E'}$ have at most one fixed point, we can write either
\[
I=\{e_1,\ldots,e_k,e_1^*,\ldots,e_k^*\}\quad\mbox{and}\quad I'=\{e'_1,\ldots,e'_k,e_1'^*,\ldots,e_k'^*\},
\]
or
\[
I=\{e_0,e_1,\ldots,e_k,e_1^*,\ldots,e_k^*\}\quad\mbox{and}\quad I'=\{e'_0,e'_1,\ldots,e'_k,e_1'^*,\ldots,e_k'^*\},
\]
with $e_i^*=i_E(e_i)$, $e_i'^*=i_{E'}(e_i')$ for $1\leq i\leq k$ and $e_0=i_E(e_0)$, $e'_0=i_{E'}(e'_0)$.
Up to replacing the vectors of $I$ and $I'$ by scalar multiples (which does not change the splitting Cartan subgroups) we may assume that
\[\omega(e_i,e_i^*)=\omega(e'_i,e_i'^*)=1\ \mbox{for $1\leq i\leq k$},\quad \omega(e_0,e_0)=\omega(e'_0,e'_0)=1.\]
Then, by letting $g(e)=e$ for $e\in E_0$ and $g(e_i)=e'_i$, $g(e_j^*)=e_j'^*$ for all $i,j$, we get an element $g\in\G$ such that $\H(E')=\H(g(E))=g\H(E)g^{-1}$.

The converse statement follows by observing that, if $E$ is an admissible basis such that $\H=\H(E)$, and $g\in\G$ satisfies $\H'=g\H g^{-1}$, then $\H'=\H(g(E))$, where $g(E)$ is an admissible basis which differs from $E$ by finitely many vectors.
\end{proof}

\begin{remark}
\label{R2.2}
{\rm (a)} In type (A), if $E$ is an admissible basis of $V$, then any basis $E'$ which differs from $E$ by finitely many vectors is admissible.
Indeed, in this case we have an element $g\in\G$ such that $E'=g(E)$.

{\rm (b)} In type (BCD), part (a) of the remark does not hold, but we point out the following construction of admissible basis.
Let an orthogonal decomposition $V=V_1\oplus V_2$ be given.
The restriction $\omega_i$ of the form $\omega$ to each subspace $V_i$ ($i\in\{1,2\}$) is nondegenerate. For $i\in\{1,2\}$, let $E_i$ be an admissible basis of $V_i$, that is,
a basis endowed with an involution $i_{E_i}:E_i\to E_i$ with at most one fixed point such that $\omega(e,e')\not=0$ if only if $e'=i_{E_i}(e)$.
Moreover, assume that $i_{E_1}$ or $i_{E_2}$ has no fixed point. Then $E_1\cup E_2$ is an admissible basis of $V$ for the involution $i_{E_1}\cup i_{E_2}$.

{\rm (c)} In type (BCD) any admissible basis can be written as
\[E=\{e_n,e_n^*\}_{n\geq 1}
\quad\mbox{or}\quad
E=\{e_0\}\cup\{e_n,e_n^*\}_{n\geq 1}\]
where $e_n^*=i_E(e_n)$ for all $n\geq 1$ and $e_0=e_0^*$ is the fixed point of $i_E$ (if it exists).
By replacing the vectors by scalar multiples (which does not change the splitting Cartan subgroup $\H(E)$), we can transform $E$ into a basis with
\[\omega(e_m,e_n^*)=\delta_{m,n}\quad\mbox{for all $m,n$}.\]
In \cite{Dimitrov-Penkov}, a basis which satisfies this property is called $\omega$-isotropic.\ If $\omega$ is symplectic, an $\omega$-isotropic basis is said to be a basis of type (C). If $\omega$ is symmetric, an $\omega$-isotropic basis is called of type (B) or (D) depending on whether $i_E$ has a fixed point or no fixed point. 

In type (BD), bases of both types (B) and (D) do exist in $V$ and their corresponding splitting Cartan subgroups cannot be conjugate.
\end{remark}

The following example shows that, in any type, there are splitting Cartan subgroups which are not conjugate. In fact, using the construction made in this example, it is easy to show that there are infinitely many conjugacy classes of splitting Cartan subgroups.

\begin{example}
\label{E2}
Let $\H=\H(E)\subset \G$ be a splitting Cartan subgroup, associated to an admissible basis. Let $I=\{e_n,e'_n\}_{n\geq 1}$ be a double infinite sequence of (pairwise distinct) vectors of $E$, moreover in type (BCD) we assume that these vectors are pairwise orthogonal, that is,
\begin{equation}
\label{Iisotropic}
\mbox{$\{e_n,e'_n\}_{n\geq 1}$ spans an isotropic subspace of $V$.}
\end{equation}
We construct a splitting Cartan subgroup $\H'\subset\G$ such that
\begin{equation}
\label{PH'}
\forall n\geq 1,\ \exists h\in\H'\ \mbox{such that}\ h(e_n)=e'_n\ \mbox{and}\ h(e'_n)=e_n.
\end{equation}
The subgroup $\H'$ cannot be conjugate to $\H$: for every $g\in\G$, we have $g(e_n)=e_n$ and $g(e'_n)=e'_n$ whenever $n\geq 1$ is large enough,
and (\ref{PH'}) yields $h\in\H'$ with
\[ghg^{-1}(e_n)=gh(e_n)=g(e'_n)=e'_n\notin\span{e_n}.\]
Hence, $g\H'g^{-1}\not\subset\H$.

For constructing $\H'$, we construct an admissible basis $\tilde{E}$ of $V$ which contains the vectors $\tilde{e}_n:=e_n+e'_n$ and $\tilde{e}'_n:=e_n-e'_n$, for all $n\geq 1$, and then we define $\H'$ as the subgroup $\H(\tilde{E})\subset\G$ of all elements which are diagonal in the basis $\tilde{E}$. This subgroup fulfills (\ref{PH'}),
since for all $n\geq 1$ we can find $h\in\H'$
such that $h(\tilde{e}_n)=\tilde{e}_n$ and $h(\tilde{e}'_n)=-\tilde{e}'_n$.

The construction of $\tilde{E}$ is done as follows. In type (A), we take
\[
\tilde{E}:=(E\setminus I)\cup \{\tilde{e}_n,\tilde{e}'_n\}_{n\geq 1}.
\]
The dual family $\tilde{E}^*=\{\tilde\phi_e:e\in \tilde{E}\}$ consists of the linear functions
\[
\tilde\phi_e:=\left\{\begin{array}{ll}
\phi_e & \mbox{if $e\in E\setminus I$,} \\
\frac{1}{2}(\phi_{e_n}+\phi_{e'_n}) & \mbox{if $e=\tilde{e}_n$,} \\
\frac{1}{2}(\phi_{e_n}-\phi_{e'_n}) & \mbox{if $e=\tilde{e}'_n$,}
\end{array}\right.
\]
where $E^*=\{\phi_e:e\in E\}$ is the dual family of $E$. Hence $\langle \tilde{E}^*\rangle=\langle E^*\rangle$, and this shows that $\tilde{E}$ is admissible.

In type (BCD), we note first that condition (\ref{Iisotropic}) implies that the double sequence $I$ and its image by the involution $i_E:E\to E$ are pairwise disjoint. Then we set
\[
\tilde{E}:=(E\setminus (I\cup i_E(I))\cup\{\tilde{e}_n,\tilde{e}'_n\}_{n\geq 1}\cup\{\tilde{f}_n,\tilde{f}'_n\}_{n\geq 1}
\]
where
\begin{eqnarray*}
\tilde{f}_n & := & \frac{i_E(e_n)}{\omega(e_n,i_E(e_n))}+\frac{i_E(e'_n)}{\omega(e'_n,i_E(e'_n))},\\
\tilde{f}'_n & := & \frac{i_E(e_n)}{\omega(e_n,i_E(e_n))}-\frac{i_E(e'_n)}{\omega(e'_n,i_E(e'_n))}. \end{eqnarray*}
It is easy to check that the basis $\tilde{E}$ so-obtained is admissible, with involution $i_{\tilde{E}}:\tilde{E}\to\tilde{E}$ given by
$i_{\tilde{E}}(\tilde{e}_n)=\tilde{f}_n$,
$i_{\tilde{E}}(\tilde{e}'_n)=\tilde{f}'_n$ for all $n\geq 1$
and $i_{\tilde{E}}\equiv i_E$ on $E\setminus(I\cup i_E(I))$.
\end{example}

For later use, we also point out that an element of $\G$ (contrary to a general element of $\GL(V)$) cannot map a subspace $A\subset V$ onto a larger subspace.

\begin{lemma}
\label{L-new}
Given subspaces $A\subsetneq B\subset V$, there is no $g\in\G$ such that $g(A)=B$.
\end{lemma}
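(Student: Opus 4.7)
The plan is to exploit the fact that every $g\in\G$ is a finite-rank perturbation of the identity on $V$, and then run a purely algebraic Fredholm-index-zero argument on the restriction $g^{-1}|_A$.

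First I would recall that, by the definition of $\G$ in Section~\ref{section-1.1}, any $g\in\G$ fixes pointwise a subspace of $V$ of finite codimension; equivalently, $g-\mathrm{id}_V$ has finite-dimensional image. The same holds for $g^{-1}-\mathrm{id}_V=-g^{-1}(g-\mathrm{id}_V)$. Supposing towards a contradiction that $g(A)=B$ with $A\subsetneq B$, the inclusion $A\subset B$ yields $g^{-1}(A)\subset g^{-1}(B)=A$, so $g^{-1}$ restricts to a linear endomorphism $T:=g^{-1}|_A:A\to A$. Writing $T=\mathrm{id}_A+h$, where $h$ is the restriction of $g^{-1}-\mathrm{id}_V$ to $A$ (necessarily with image inside $A$ and of finite dimension), we see that $T$ is a finite-rank perturbation of $\mathrm{id}_A$.

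The main step is then to show that $T$ is surjective. Being a restriction of the bijective operator $g^{-1}$, the map $T$ is automatically injective, so it suffices to establish the algebraic Fredholm identity $\dim\ker T=\dim\mathrm{coker}\,T$. This is the standard linear-algebra fact for finite-rank perturbations of the identity: on the finite-codimensional subspace $K:=A\cap\ker h$ the map $T$ acts as $\mathrm{id}_K$, and a short snake-lemma computation identifies $\ker T$ and $\mathrm{coker}\,T$ with the kernel and cokernel of the endomorphism $\bar T$ induced by $T$ on the finite-dimensional quotient $A/K$, which trivially have equal dimension by rank-nullity. Once $T$ is surjective, $g^{-1}(A)=A$, hence $A=g(A)=B$, contradicting $A\subsetneq B$.

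The only potentially subtle point is this algebraic Fredholm-index-zero statement, but it is elementary and requires no analytic input; every other ingredient is formal. The argument is uniform across types (A), (BD) and (C), using nothing more than the finitary nature of $\G$: no appeal to the form $\omega$ or to $V_*$ is needed.
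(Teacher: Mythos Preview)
Your proof is correct and takes a genuinely different route from the paper's. The paper argues concretely with an admissible basis $E$: it picks a finite subset $E_0\subset E$ large enough that $g$ stabilises $\langle E_0\rangle$ and that $A\cap\langle E_0\rangle\subsetneq B\cap\langle E_0\rangle$, then observes that $g$ would have to carry the smaller finite-dimensional space onto the strictly larger one, which is impossible by dimension. Your argument is basis-free: you restrict $g^{-1}$ to an endomorphism $T$ of $A$, note that $T-\mathrm{id}_A$ has finite rank, and invoke the elementary Fredholm-index-zero fact (via the snake lemma on $0\to K\to A\to A/K\to 0$ with $K=\ker(T-\mathrm{id}_A)$) to conclude that $T$ injective forces $T$ surjective. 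Your approach isolates exactly the property of $\G$ that matters---every element is a finite-rank perturbation of the identity---and would apply verbatim to any group of automorphisms with that property; the paper's approach is more hands-on and stays closer to the coordinate description used elsewhere in the text. Both are short, and neither needs the form $\omega$ or the dual space $V_*$.
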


\begin{proof}
Arguing by contradiction, assume that there exists $g\in\G$ with $g(A)=B$.
Let $E$ be an admissible basis of $V$. There is a finite subset $E_0\subset E$, such that $g(e)=e$ for all $e\in E\setminus E_0$ and $g$ stabilizes the finite-dimensional subspace $\span{E_0}$. Up to choosing the finite subset $E_0$ larger if necessary, we may assume that $A\cap\span{E_0}\subsetneq B\cap\span{E_0}$, while the equality $g(A\cap\span{E_0})=B\cap\span{E_0}$ holds. This is a contradiction.
\end{proof}

\subsection{Splitting parabolic subgroups and generalized flags}

\label{section-2.2}

The notion of splitting parabolic subgroup can be described in a more handy way by using a model from linear algebra, based on the following definition.

\begin{definition}
\label{D4}
{\rm (a)} A {\em generalized flag} in $V$ is a collection $\mathcal{F}$ of linear subspaces of $V$ which satisfies the following conditions:
\begin{itemize}
\item The inclusion relation $\subset$ is a total order on $\Ff$;
moreover every subspace $F\in\Ff$ has an immediate predecessor or an immediate successor in $\mathcal{F}$ with respect to $\subset$.
\item For every nonzero vector $v\in V$, there is a pair of consecutive subspaces $F',F''\in\mathcal{F}$ such that $v\in F''\setminus F'$.
\item In type (BCD) we require a generalized flag to be {\em isotropic} in the following sense: for every $F\in\Ff$ we have $F^\perp\in\Ff$, and the map $i_\Ff:F\mapsto F^\perp$ is an involution of $\Ff$.
\end{itemize}
Note that the group $\G$ acts on the respective set of generalized flags in a natural way.



{\rm (b)} Let $E$ be an admissible basis. A generalized flag $\mathcal{F}$ in $V$ is said to be {\em $E$-compatible} if it is $\H(E)$-fixed.\ This is equivalent to requiring that each subspace $F\in\Ff$ is spanned by a subset of $E$.

{\rm (c)}
We say that $\Ff$ is {\em weakly $E$-compatible} if it is compatible with a basis $E'$ of $V$ such that $E'\setminus E$ and $E\setminus E'$ are finite (that is, $E$ and $E'$ differ by finitely many vectors).
\end{definition}

\begin{example}
\label{E1}
Generalized flags can take various forms:

{\rm (a)} A generalized flag can be a finite sequence of subspaces $\Ff=\{F_0=0\subset F_1\subset\ldots\subset F_m=V\}$.


{\rm (b)} Let $F\subset V$ be a subspace (taken isotropic in type (BCD)). In type (A), we set $\Ff_F:=\{0\subset F\subset V\}$; this is the minimal generalized flag which contains the subspace $F$. In type (BCD), we set $\Ff_F:=\{0\subset F\subset F^\perp\subset V\}$; this is the minimal isotropic generalized flag which contains $F$. In each case we call $\Ff_F$ the {\em generalized flag associated to $F$}.

{\rm (c)}
$\Ff=\{F_0=0\subset F_1\subset F_2\subset\cdots\subset F_n\subset\ldots\}$ is a generalized flag 
(in type (A))
provided that $\bigcup_{n}F_n=V$.

{\rm (d)}
$\Ff=\{\ldots \subset F_{-n}\subset \cdots\subset F_{-1}\subset F_0\subset F_1\subset\cdots\subset F_n\subset\cdots \}$ is a generalized flag (in type (A)) if $\bigcap_{n}F_n=0$ and $\bigcup_{n}F_n=V$.
It is a generalized flag in type (BCD) if and only if there is some $n_0$ such that $F_n^\perp=F_{n_0-n}$ for all $n$.

{\rm (e)}
Let $E=\{e_x\}_{x\in\mathbb{Q}}$ be a basis of $V$ indexed by the rational numbers. For $x\in\mathbb{Q}$ let $F'_x:=\span{e_y:y<x}$ and $F''_x:=\span{e_y:y\leq x}$. Then $\Ff:=\{F'_x,F''_x\}_{x\in\mathbb{Q}}$ is an $E$-compatible generalized flag (in type (A)) such that each subspace lacks either an immediate predecessor or an immediate successor.

\end{example}

\begin{proposition}[{\cite{Dimitrov-Penkov}}]
\label{P0}
Let $E$ be an admissible basis and $\Ff$ be an $E$-compatible generalized flag in $V$.\ Then the subgroup
\[\P_\Ff=\mathrm{Stab}_\G(\Ff):=\{g\in \G:g\Ff=\Ff\}\]
is a splitting parabolic subgroup of $\G$ that contains the Cartan subgroup $\H(E)$. Moreover, every parabolic subgroup of $\G$ that contains $\H(E)$ is obtained in this way.

In addition, $\P_\Ff$ is a splitting Borel subgroup if and only if the generalized flag $\Ff$ is maximal, in the sense that $\dim F''/F'=1$ for each pair of consecutive subspaces $\{F',F''\}$ in $\Ff$.
\end{proposition}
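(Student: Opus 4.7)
The plan is as follows. The containment $\H(E)\subset\P_\Ff$ is immediate, since each $F\in\Ff$ is spanned by a subset of $E$ and hence preserved by every diagonal element of $\H(E)$. To see that $\P_\Ff$ is a splitting parabolic, fix any filtration $E=\bigcup_{n\geq 1}E_n$ by finite ($i_E$-stable, in type (BCD)) subsets, and set $\Ff_n:=\{F\cap\span{E_n}:F\in\Ff\}$, a finite (isotropic, in type (BCD)) chain of $E_n$-spanned subspaces of $\span{E_n}$. A preliminary observation is that any $g\in\G$ satisfying $g\Ff=\Ff$ must fix every $F\in\Ff$ individually: $g$ permutes $\Ff$ preserving inclusions, and Lemma~\ref{L-new} rules out both $g(F)\subsetneq F$ and $g(F)\supsetneq F$. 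Combining this with the decomposition $F=(F\cap\span{E_n})\oplus\span{F\cap(E\setminus E_n)}$ and the fact that $\G(E_n)$ acts trivially on $\span{E\setminus E_n}$, I obtain
\[
\P_\Ff\cap\G(E_n)=\Stab_{\G(E_n)}(\Ff_n),
\]
which is a classical parabolic subgroup of $\G(E_n)$. Hence $\P_\Ff$ is splitting parabolic.

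For the converse, let $\P\supset\H(E)$ be a splitting parabolic, and choose a filtration so that $\P(E_n):=\P\cap\G(E_n)$ is parabolic in $\G(E_n)$ for every $n$. Since $\P(E_n)\supset\H(E_n)$, it is the stabilizer in $\G(E_n)$ of a unique $E_n$-compatible flag $\Ff_n\subset\span{E_n}$ (by the finite-dimensional theory). The relation $\P(E_m)=\P(E_n)\cap\G(E_m)$ for $m\leq n$ translates to $\Ff_m=\{F\cap\span{E_m}:F\in\Ff_n\}$, so the $\Ff_n$ form a compatible directed system of finite flags. To assemble $\Ff$: each $F\in\Ff_n$ is spanned by some $A_n\subset E_n$, and compatibility forces $A_n\cap E_m$ to match the corresponding subset for $\Ff_m$ whenever $m\leq n$, so taking $A:=\bigcup_m A_m$ (with the $A_m$ compatibly chosen) yields an $E$-compatible subspace $\span{A}\subset V$. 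Doing this for every member of every $\Ff_n$ produces an $E$-compatible chain $\Ff$, and by construction $\P_\Ff\cap\G(E_n)=\P(E_n)$ for all $n$, whence $\P=\P_\Ff$. I expect the main obstacle here to be verifying that $\Ff$ is an honest generalized flag, i.e.\ satisfies the predecessor/successor condition: given $0\neq v\in V$, I pick $n$ with $v\in\span{E_n}$ and select the consecutive pair $F'\subsetneq F''$ in $\Ff_n$ with $v\in F''\setminus F'$; the corresponding members of $\Ff$ remain consecutive by compatibility of the directed system.

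Finally, a splitting Borel subgroup is by definition a minimal splitting parabolic. Via the correspondence just established, $\P_\Ff$ is minimal among splitting parabolics containing $\H(E)$ iff $\Ff$ is maximal among $E$-compatible generalized flags, and at the finite level this says that $\Ff_n$ is a full flag in $\span{E_n}$ for every $n$, or equivalently $\dim F''/F'=1$ for every pair of consecutive subspaces $F'\subsetneq F''$ in $\Ff$. A small additional point in type (BCD) is that the inserted line in any would-be refinement must be chosen compatibly with the involution $i_\Ff$, which is always possible unless each consecutive pair already has $\dim F''/F'=1$.
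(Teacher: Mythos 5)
This proposition is imported from \cite{Dimitrov-Penkov} and the paper gives no proof of its own, so there is nothing to compare against line by line; I can only assess your argument on its merits. Your forward direction is sound: the reduction of ``$g\Ff=\Ff$'' to ``$g(F)=F$ for every $F\in\Ff$'' via Lemma \ref{L-new}, and the identification $\P_\Ff\cap\G(E_n)=\Stab_{\G(E_n)}(\Ff_n)$ using $F=(F\cap\span{E_n})\oplus(F\cap\span{E\setminus E_n})$, are both correct.

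The gap is in the converse, at the step where you assemble $\Ff$ from the directed system of finite flags $\Ff_n$. Taking ``unions of compatibly chosen $A_m$'' does not produce a generalized flag, and your verification of the axioms does not go through as stated. Concretely, take $E=\{e_x\}_{x\in\mathbb{Q}}$ and let $\P$ be the splitting Borel of Example \ref{E1}\,(e), so that each $\Ff_n$ is a full flag of $\span{E_n}$ ordered by the rational order. A member $F\in\Ff_{n_0}$ admits many compatible systems of lifts $F_m\in\Ff_m$: for instance $F_m=\span{e_x\in E_m:x<\pi}$ is such a system, and its union $\span{e_x:x<\pi}$ is a $\P$-invariant, $E$-compatible subspace with \emph{neither} an immediate predecessor \emph{nor} an immediate successor in the collection of all such limits --- so the second axiom of Definition \ref{D4} fails if you keep all limits. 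If instead you keep only one limit per member (say, always the smallest compatible extension), you obtain only the subspaces $F''_x=\span{e_y:y\leq x}$ and the third axiom fails: the smallest member containing $e_0$ has no immediate predecessor in that collection. Relatedly, your check of the third axiom asserts that the consecutive pair $F'\subsetneq F''$ of $\Ff_n$ around $v$ has ``corresponding members of $\Ff$'' that remain consecutive; this is true only for the \emph{canonical} lifts ($F''_N:=$ smallest member of $\Ff_N$ containing $v$, $F'_N:=$ largest member not containing $v$), and your construction does not single these out. The repair is the device used in \cite{Dimitrov-Penkov}: the compatible system $\{\Ff_n\}$ determines a total preorder $\preceq$ on $E$, and one must define $\Ff$ to consist precisely of the subspaces $\span{e\in E:e\prec e_0}$ and $\span{e\in E:e\preceq e_0}$ for $e_0\in E$; for this pruned collection both remaining axioms (and, in type (BCD), closure under $\perp$, which you do not address in the converse) can be checked. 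Finally, in the Borel characterization your parenthetical about type (BCD) hides a real issue in type (D): a consecutive pair $F\subsetneq F^\perp$ with $\dim F^\perp/F=2$ cannot always be refined so as to strictly shrink the stabilizer in the relevant group, so the ``iff'' there needs the precise convention on parabolics of $\O_{2n}$ rather than the generic refinement argument.
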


\begin{remark}
Contrary to the finite-dimensional situation, two splitting Borel subgroups of $\G$ are not necessarily $\G$-conjugate, even if they contain the same splitting Cartan subgroup. This observation follows from Proposition \ref{P0} and from the fact that two $E$-compatible maximal generalized flags $\Ff,\Gg$ do not belong to the same $\G$-orbit in general. For instance, $\Ff$ and $\Gg$ certainly belong to different $\G$-orbits if they are not isomorphic as totally ordered sets. However, even if they are isomorphic as totally ordered sets, 
the generalized flags $\Ff$ and $\Gg$ do not have to be $\G$-conjugate.
For instance if $\Ff=\{\span{e_1,\ldots,e_n}\}_{n\geq 0}$ and
$\Gg=\{\span{e_1,\ldots,e_{2n}},\span{e_1,\ldots,e_{2n},e_{2n+2}}\}_{n\geq 0}$
(where $E=\{e_k\}_{k\geq 1}$), then $\Stab_\G(\Ff)$ and $\Stab_\G(\Gg)$ are two splitting Borel subgroups of $\G=\GL(\infty)$ which are not conjugate.
Indeed every $g\in \G$ satisfies $g(\span{e_1,\ldots,e_n})=\span{e_1,\ldots,e_n}$ for large $n$, hence $\Gg\notin\G\cdot\Ff$.



\end{remark}

\subsection{Ind-varieties of generalized flags}
\label{section-2.3}

Fix an admissible basis $E$ of $V$ and a generalized flag $\Ff=\{F_\alpha\}$ in $V$, compatible with $E$. 
Let $\P=\P_\Ff\subset \G$ be the splitting parabolic subgroup obtained as the stabilizer of $\Ff$, like in Proposition \ref{P0}. In this section, we describe the homogeneous space $\G/\P=\G\cdot\Ff$ as an ind-variety of generalized flags.
See \cite{Dimitrov-Penkov} for more details.

\begin{definition}
A generalized flag $\Gg$ is said to be {\em $E$-commensurable} with $\Ff$ if $\Gg=\{G_\alpha\}$ is parameterized by the same ordered set as $\Ff$, and in addition satisfies the following conditions:
\begin{itemize}
\item $\Gg$ is weakly $E$-compatible;
\item there exists a finite-dimensional subspace $U\subset V$, such that $F_\alpha+U=G_\alpha+U$ and $\dim F_\alpha\cap U=\dim G_\alpha\cap U$ for any $\alpha$.
\end{itemize}
\end{definition}

Let $\Fl(\Ff,E,V)$ denote the set of all generalized 
flags $\Gg$ that are $E$-commensurable with $\Ff$.
Let $\Fl_\omega(\Ff,E,V)$ denote the subset of all such generalized flags which are isotropic.
Then the homogeneous space $\G/\P_\Ff=\G\cdot\Ff$ coincides with the set $\Fl(\Ff,E,V)$ in type (A),
respectively with $\Fl_\omega(\Ff,E,V)$ in type (BCD). (Note that the notion of commensurability is the same whatever type is considered.)

In Section \ref{section-1.2} we notice that the quotient $\G/\P_\Ff$ has the structure of an ind-variety,
obtained by considering a filtration 
\[E=\bigcup_{n\geq 1} E_n\] 
by finite subsets; in type (BCD) the basis is endowed with the involution $i_E:E\to E$ (with at most one fixed point) and we require the subsets $E_n$ to be $i_E$-stable, so that the restriction of the form $\omega$ to each subspace $\span{E_n}$ is nondegenerate.

The ind-structure on $\Fl(\Ff,E,V)$ and $\Fl_\omega(\Ff,E,V)$ is given via the identification with a direct limit
\begin{equation}
\label{Fl-Flo}
\Fl(\Ff,E,V)=\lim_\to \Fl(\Ff,E_n),\quad \Fl_\omega(\Ff,E,V)=\lim_\to \Fl_\omega(\Ff,E_n),
\end{equation}
where $\Fl(\Ff,E_n)$ and $\Fl_\omega(\Ff,E_n)$ are 
varieties of partial flags of the space $V_n:=\span{E_n}$ defined in the following way.
The generalized flag $\Ff$ gives rise to a flag in the finite-dimensional subspace $\langle E_n\rangle$, namely
let $\Ff(n)$ be the collection of subspaces
\begin{equation}
\label{F(n)}
\Ff(n):=\{F\cap \span{E_n}\}_{F\in\Ff}.
\end{equation}
Let $d(\Ff,n)$ denote the corresponding dimension vector
\[d(\Ff,n):=\{d(\Ff,n)_F\}_{F\in\Ff}\quad\mbox{where}\quad d(\Ff,n)_F:=\dim F\cap \span{E_n}.\]
The (finite-dimensional) algebraic variety 
\[
\X_\Ff(n):=\G(E_n)\cdot\Ff(n)\cong \G(E_n)/\P_\Ff(E_n)
\]
can be viewed as the set of collections of nested subspaces of $\span{E_n}$
\[\X_\Ff(n)=
\Fl(\Ff,E_n) := 
\big\{\{M_F\}_{F\in \Ff}:\forall F\in\Ff,\  \dim M_F=d(\Ff,n)_F\big\}
\]
in type (A), respectively
\[\X_\Ff(n)=
\Fl_\omega(\Ff,E_n) := \big\{\{M_F\}_{F\in \Ff}\in\Fl(\Ff,E_n):\forall F\in\Ff,\  (M_F)^\perp=M_{F^\perp}\big\}
\]
in type (BCD).

For each $n\geq 1$, we have the embedding
\[
\phi_\Ff(n):\X_\Ff(n)\hookrightarrow\X_\Ff(n+1),\ \{M_F\}_{F\in \Ff}\mapsto\{N_F\}_{F\in\Ff}
\]
given by
\[
N_F=M_F\oplus(F\cap \span{E_{n+1}\setminus E_n})\quad\mbox{for all $F\in\Ff$.}
\]
Finally, the ind-variety $\G/\P_\Ff=\G\cdot\Ff=\Fl(\Ff,E,V)$, respectively $\Fl_\omega(\Ff,E,V)$, is obtained as the limit of the inductive system
\[
\X_\Ff(1)\hookrightarrow\X_\Ff(2)\hookrightarrow\cdots\hookrightarrow\X_\Ff(n)\hookrightarrow\X_\Ff(n+1)\hookrightarrow\cdots.
\]
This yields (\ref{Fl-Flo}).


\begin{remark}
\label{R-section2.3}
In (\ref{F(n)}),
$\Ff(n)$ is an a priori infinite collection of subspaces of $\span{E_n}$ with repetitions.
If we avoid repetitions, we can also write $\Ff(n)$ as an increasing sequence of subspaces
\[
\{F_0=0\subsetneq F_1\subsetneq \ldots\subsetneq F_s=\span{E_n}\}
\]
and the dimension vector $d(\Ff,n)$ as an increasing sequence
\[
\hat{d}(\Ff,n):=\{d_0=0<d_1<\ldots<d_s\}=\{\dim F_k\}_{k=0}^s.
\]
Then $\Fl(\Ff,E_n)$ can be identified with the variety of partial flags in $V_n=\span{E_n}$,
\[
\Fl(\hat{d}(\Ff,n),V_n)=\big\{
\{M_0\subset M_1\subset \ldots\subset M_s=V_n\}:\forall k,\ \dim M_k=d_k
\big\},
\]
while in type (BCD), $\Fl_\omega(\Ff,E_n)$ is identified with the variety of 
isotropic partial flags 
\[
\Fl_\omega(\hat{d}(\Ff,n),V_n)=\big\{
\{M_k\}_{k=0}^s\in \Fl(\hat{d}(\Ff,n),V_n):\forall k,\ (M_k)^\perp=M_{s-k}
\big\}.
\]

In type (A),
the embedding $\phi_\Ff(n)$ corresponds to an embedding of partial flag varieties
\[
i_n:\Fl(\hat{d}(\Ff,n),V_n)\hookrightarrow\Fl(\hat{d}(\Ff,n+1),V_{n+1})
\]
obtained in the following way.
Assume that $E_{n+1}=E_n\cup\{e\}$ for simplicity (in the general case, $i_n$ is obtained as a composition of mappings of the following type).
Then
\[
\Ff(n+1)=\left\{
\begin{array}{l}
\{F_0\subset \ldots\subset F_k\subset F_k\oplus\span{e}\subset \ldots\subset F_s\oplus\span{e}\} \\[1mm]
\parbox{10cm}{for some $k\leq s$, if $\hat{d}(\Ff,n+1)$ is a longer sequence than $\hat{d}(\Ff,n)$,} \\[3mm]
\{F_0\subset \ldots\subset F_k\subset F_{k+1}\oplus\span{e}\subset \ldots\subset F_s\oplus\span{e}\} \\[1mm]
\parbox{10cm}{for some $k< s$, if $\hat{d}(\Ff,n+1)$ is a sequence of the same length as $\hat{d}(\Ff,n)$.}
\end{array}
\right.
\]
The map $i_n$ is now defined via the respective formula
\[
i_n(\{M_k\}_{k=0}^s)=\left\{
\begin{array}{ll}
\{M_0\subset\ldots\subset M_k\subset M_k\oplus\span{e}\subset \ldots\subset M_s\oplus\span{e}\}, \\[2mm]
\{M_0\subset\ldots\subset M_k\subset M_{k+1}\oplus\span{e}\subset\ldots\subset M_s\oplus\span{e}\}.
\end{array}
\right.
\]
The embeddings $i_n$ have been introduced in \cite{Dimitrov-Penkov} in different notation.
In type (BCD) the construction is similar.
\end{remark}

\section{A characterization of large splitting parabolic subgroups}

\label{section-3}

Proposition \ref{P1} is incorporated in the following more complete statement.

\begin{proposition}
\label{P1bis}
Let $\P$ be a splitting parabolic subgroup of $\G$. Let $\Ff$ be the unique generalized flag for which $\P=\Stab_\G(\Ff)$ (see Proposition \ref{P0}). The following conditions are equivalent:
\begin{itemize}
\item[\rm (i)] $\P$ is large;
\item[\rm (ii)] $\Ff$ is weakly $E$-compatible with every admissible basis $E$;
\item[\rm (iii)] for every admissible basis $E$, the ind-variety $\G/\P=\G\cdot\Ff$ contains a generalized flag which is $E$-compatible;
\item[\rm (iv)] for every splitting Cartan subgroup $\H\subset\G$, there is $g\in\G$ such that $g\H g^{-1}\subset \P$.
\end{itemize}
\end{proposition}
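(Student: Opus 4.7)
The plan is to close the cycle (i) $\Rightarrow$ (iii), (iii) $\Leftrightarrow$ (ii), (iii) $\Leftrightarrow$ (iv), and finally the converse of (i)$\Rightarrow$(iii). The three equivalences among (ii), (iii), (iv) are formal consequences of Lemma~\ref{L2.1} together with the observation that a generalized flag is $\H(E)$-stable exactly when each of its subspaces is spanned by a subset of $E$, i.e.\ when it is $E$-compatible. Concretely, (iii) $\Leftrightarrow$ (iv) holds because $\Gg=g\cdot\Ff$ is $\H(E)$-stable iff $\H(E)\subset\Stab_\G(\Gg)=g\P g^{-1}$, iff $g^{-1}\H(E)g\subset\P$. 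For (ii) $\Leftrightarrow$ (iii): if $\Ff$ is compatible with an admissible basis $E'$ differing from $E$ by finitely many vectors, Lemma~\ref{L2.1} provides $g\in\G$ conjugating $\H(E')$ into $\H(E)$, so that $g\cdot\Ff$ is $E$-compatible; conversely, if $g\cdot\Ff$ is $E$-compatible for some $g\in\G$, then $\Ff$ is compatible with the admissible basis $g^{-1}(E)$, which differs from $E$ by finitely many vectors since $g$ is finitary.

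For (i) $\Rightarrow$ (iii), write $\Ff=\{0=F_0\subsetneq F_1\subsetneq\cdots\subsetneq F_m=V\}$ with $F_1,\ldots,F_j$ finite-dimensional ($\dim F_k=d_k$) and $F_{j+1},\ldots,F_{m-1}$ finite-codimensional ($\mathrm{codim}\,F_k=c_k$). Given an admissible basis $E$, I construct an $E$-compatible flag $\Gg=\{F'_k\}$ of the same type by selecting nested subsets $S_1\subset\cdots\subset S_j$ of $E$ with $|S_k|=d_k$ and nested $T_{j+1}\supset\cdots\supset T_{m-1}$ with $|T_k|=c_k$ and $S_j\cap T_{j+1}=\emptyset$, and setting $F'_k:=\span{S_k}$ for $k\leq j$ and $F'_k:=\span{E\setminus T_k}$ for $k>j$. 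In type (BCD) I enforce $F'_k=(F'_{m-k})^\perp$ by taking $T_{m-k}:=i_E(S_k)$, which has the correct cardinality since $c_{m-k}=d_k$. A finite-dimensional subspace $U$ containing $F_j+F'_j$ together with complements of $F_{j+1}$ and $F'_{j+1}$ in $V$ then witnesses the $E$-commensurability of $\Gg$ with $\Ff$, so $\Gg\in\G\cdot\Ff$.

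The main work is in the converse: assuming $\P$ is not large, I construct an admissible basis $E$ with $\Ff$ not weakly $E$-compatible. In the first case, $\Ff$ contains a subspace $F$ with $\dim F=\dim V/F=\infty$. Starting from any admissible basis $E_0$, if $E_0\cap F$ is finite I simply set $E:=E_0$; otherwise $E_0\cap F$ and $E_0\setminus F$ are both countably infinite (the latter because $\dim V/F=\infty$), and I enumerate $E_0\cap F=\{e_i\}_{i\geq 1}$, choose $\{f_i\}_{i\geq 1}\subset E_0\setminus F$, and set
\[
E:=(E_0\setminus\{e_i,f_i\}_{i\geq 1})\cup\{e_i+f_i,\,e_i-f_i\}_{i\geq 1},
\]
using the symmetric, involution-compatible variant of Example~\ref{E2} in type (BCD). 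A direct check shows $E$ is admissible with $E\cap F=\emptyset$; hence for any admissible $E'$ differing from $E$ by finitely many vectors, $E'\cap F$ is finite and cannot span the infinite-dimensional $F$, so $\Ff$ is not weakly $E$-compatible. In the remaining case, $\Ff$ is an infinite chain whose members are all finite-dim or finite-codim, so some subsequence has $\dim F_n\to\infty$ (or $\mathrm{codim}\,F_n\to\infty$); a parallel mixing modification produces an admissible $E$ with $|E\cap F_n|$ (respectively $|E\setminus F_n|$) strictly less than $\dim F_n$ (respectively $\mathrm{codim}\,F_n$) for infinitely many $n$, again ruling out weak $E$-compatibility. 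The principal technical obstacle is executing the mixing construction in type (BCD) while preserving the involution structure of the admissible basis, and arranging in the infinite-chain case that a single modified basis obstructs compatibility for infinitely many members of $\Ff$ simultaneously.
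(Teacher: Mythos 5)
Your reduction of (ii), (iii), (iv) to one another via Lemma \ref{L2.1} is correct and is essentially the paper's argument. The two substantive directions, however, each contain a genuine gap. In (i)$\Rightarrow$(iii), the final inference ``a suitable $U$ witnesses the $E$-commensurability of $\Gg$ with $\Ff$, so $\Gg\in\G\cdot\Ff$'' presupposes the identification $\G\cdot\Ff=\Fl(\Ff,E,V)$ for the \emph{new} basis $E$; but Section \ref{section-2.3} establishes this identification only when $\Ff$ is compatible with $E$, and whether $\Ff$ is even weakly $E$-compatible is precisely statement (ii), i.e.\ what is being proved. To close this you must either first prove (i)$\Rightarrow$(ii) --- which is what the paper does, modifying the given $E$ only on a finite subset $I$ chosen so that $F_j\subset\span{I}$ and $\span{E\setminus I}\subset F_{j+1}$, the latter inclusion using that the finite-codimensional member of $\Ff$ is cut out by finitely many functionals in $V_*$ --- or directly exhibit $g\in\G$ with $g\Ff=\Gg$, which again forces you to verify that $F_k$ and $F'_k$ agree on a finite-codimensional subspace; your construction never engages with $V_*$ at this point.

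In the converse, the case of a member $F$ with $\dim F=\mathrm{codim}\,F=\infty$ is fine in type (A), but the infinite-chain case rests on a criterion that is actually insufficient: ``$|E\cap F_n|<\dim F_n$ for infinitely many $n$'' does not rule out weak $E$-compatibility, since a deficit bounded by a constant $N$ is exactly what a modification of $E$ by $N$ vectors can repair (delete one vector of a compatible basis and adjoin one lying in no $F_n$: the deficit is $1$ for every $n$, yet the flag remains weakly compatible). Worse, the pairwise mixing itself only produces deficit $1$ in each $F_{2n}$ (removing $2n-1$ old vectors from $F_{2n}$ and adding back $2n-2$ new ones), so your stated conclusion does not follow. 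The correct obstruction is that infinitely many successive quotients $F_k/F_{k-1}$ receive \emph{no} basis vector, so that infinitely many distinct replacements would be needed; this is what the paper's choice $\varepsilon_k\in E\cap(F_k\setminus F_{k-1})$ delivers, and what its Cartan-subgroup formulation (a finitary $g$ can undo only finitely many mixings, Example \ref{E2}) makes transparent. Finally, the type (BCD) issue you flag is real and not cosmetic: the mixing of Example \ref{E2} requires the chosen double sequence to span an isotropic subspace, so you must first pass to isotropic members of $\Ff$ (possible since $F^\perp\in\Ff$) and, in the single-subspace case, pair $\varepsilon_{2n}\in E\cap F$ with $i_E(\varepsilon_{2n+1})\notin F$ rather than with arbitrary elements of $E\setminus F$ such as $i_E(\varepsilon_{2n})$, for which the construction breaks down.
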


\begin{proof}
{\rm (i)}$\Rightarrow${\rm (ii)}: 
Assume that $\P$ is a large splitting parabolic subgroup. Then the generalized flag $\Ff$ has the form
\[\Ff=\{F_0=0\subset F_1\subset F_2\subset \ldots \subset F_m=V\},\]
and moreover, there is a unique index $k\in\{1,\ldots,m\}$ for which the space $F_k/F_{k-1}$ is infinite dimensional. The subspace $F_{k-1}$ is finite dimensional and the subspace $F_k$ is finite codimensional. 
Moreover, since the generalized flag $\Ff$ is compatible with an admissible basis (Proposition \ref{P0}),  there is a finite subset $\Phi\subset V_*$ such that $F_k=\bigcap_{\phi\in\Phi}\ker\phi$.

Let $E\subset V$ be an admissible basis, hence the dual family $E^*=\{\phi_e:e\in E\}$ spans the subspace $V_*\subset V^*$. We can find a finite subset $I\subset E$ such that
\begin{itemize}
\item $F_{k-1}\subset V_1:=\langle I\rangle$;
\item $\Phi\subset\langle \phi_e:e\in I\rangle$, so that $V_2:=\langle E\setminus I\rangle\subset F_k$.
\end{itemize}
In type (BCD), up to choosing $I$ larger if necessary, we may assume that $I$ is stable by the involution $i_E:E\to E$ and contains the fixed point of $i_E$ if it exists. Hence the restriction of $\omega$
to $V_1$ and $V_2$ is nondegenerate and the decomposition
\[
V=V_1\oplus V_2
\]
is orthogonal in this case.

The sequence
\[
\Ff':=\{F'_0\subset F'_1\subset F'_2\subset\cdots\subset F'_m\}\quad\mbox{with}\quad F'_j:=V_1\cap F_j
\]
is a flag of the finite-dimensional space $V_1$, and we can find a basis $E_1$ of $V_1$ such that $\Ff'$ is compatible with $E_1$, that is, $F'_j$ is spanned by a subset $E_{1,j}\subset E_1$ for all $j$. Moreover, in type (BCD) we have
\[
(F'_j)^{\perp_1}=V_1\cap ({F_j}^\perp)=V_1\cap F_{m-j}=F'_{m-j}\quad\mbox{for all $j$,}
\]
where $\perp_1$ indicates the orthogonal space with respect to the restriction of $\omega$ to $V_1$, hence we may assume that the basis $E_1$ is isotropic with respect to the restriction of $\omega$.

We obtain a basis $E':=E_1\cup(E\setminus I)$ of $V$ which differs from $E$ by finitely many vectors, and is in addition admissible (see Remark \ref{R2.2}\,{\rm (a)}--{\rm (b)}).
For all $j$ we have
\[
F_j=\left\{
\begin{array}{ll}
F'_j=\langle E_{1,j}\rangle & \mbox{if $j\leq k-1$,} \\
V_2\oplus F'_j=\langle(E\setminus I)\cup E_{1,j}\rangle & \mbox{if $j\geq k$,}
\end{array}
\right.
\]
hence $\Ff$ is $E'$-compatible, and therefore weakly $E$-compatible.

{\rm (ii)}$\Rightarrow${\rm (iii)}: Assume that $\Ff$ is weakly $E$-compatible. This means that there is an admissible basis $E'$ which differs from $E$ by finitely many vectors, and such that
$\Ff$ is $E'$-compatible. 
The latter fact is equivalent to saying that $\Ff$ is a fixed point of the splitting Cartan subgroup $\H(E')$.
By Lemma \ref{L2.1}, we can find $g\in\G$ satisfying $\H(E)=g\H(E')g^{-1}$. 
This implies that the generalized flag $g\Ff$ is a fixed point of $\H(E)$, and therefore $g\Ff$ is $E$-compatible. 



{\rm (iii)}$\Rightarrow${\rm (iv)}: 
Every splitting Cartan subgroup of $\G$ is of the form
$\H(E)$ for an admissible basis $E$. By {\rm (iii)} we can find $g\in\G$
so that $g\Ff$ is $E$-compatible, that is, fixed by $\H(E)$. 
This yields $g^{-1}\H(E)g\subset\Stab_\G(\Ff)=\P$.

{\rm (iv)}$\Rightarrow${\rm (i)}: Assume that $\P$ is not large. 
This implies that one of the following cases occurs.
\begin{itemize}
\item[(a)] $\Ff$ contains a subspace $F_0$ that is both infinite dimensional and infinite codimensional, or
\item[(b)] $\Ff$ contains infinitely many subspaces, in particular, an increasing chain 
$F_1\subset F_2\subset\cdots\subset F_k\subset\cdots$
or a decreasing chain
$F_1\supset F_2\supset \cdots\supset F_k\supset\cdots$.
\end{itemize}
Moreover, in type (BCD), since we have $F^\perp\in\Ff$ whenever $F\in\Ff$, we may assume that $F_0$ is an isotropic subspace of $V$ in case (a), and that $\{F_k\}_{k\geq 1}$ is a chain of isotropic subspaces of $V$ in case (b).

Let $E$ be an admissible basis of $V$ such that the generalized flag $\Ff$ is $E$-compatible.
We claim that there is a double infinite sequence
\begin{equation}
\label{ene'n}
\{e_n,e'_n\}_{n\geq 1}\subset E
\end{equation}
such that
\begin{equation}
\label{Pene'n}
\forall n\geq 1,\ \exists F\in\Ff\ \mbox{such that}\  e_n\in F\ \mbox{and}\  e'_n\notin F
\end{equation}
and moreover
\begin{equation}
\label{Pene'nBCD}
\mbox{$\{e_n,e'_n\}_{n\geq 1}$ span an isotropic subspace of $V$}\quad\mbox{(in type (BCD)).}
\end{equation}
The construction of the double sequence $\{e_n,e'_n\}_{n\geq 1}$ can be done as follows. In case (b), for each $k\geq 2$ we take a vector $\varepsilon_k\in E$ lying in $F_k\setminus F_{k-1}$ in the case of an increasing chain,
respectively in $F_{k-1}\setminus F_{k}$ in the case of a decreasing chain. 
Then we set $(e_n,e'_n):=(\varepsilon_{2n},\varepsilon_{2n+1})$, respectively $(e_n,e'_n):=(\varepsilon_{2n+1},\varepsilon_{2n})$, 
and we have (\ref{Pene'n}). In type (BCD), since each subspace $F_k$ is isotropic, we get (\ref{Pene'nBCD}).

In case (a), in type (A), relying on the fact that $F_0$ is infinite dimensional and infinite codimensional, we take an infinite subset $\{e_n\}_{n\geq 1}\subset E$ of vectors which belong to $F_0$ and an infinite subset $\{e'_n\}_{n\geq 1}\subset E$ of vectors which do not belong to $F_0$. The so-obtained double sequence clearly satisfies (\ref{Pene'n}). In type (BCD), we first take an infinite subset $\{\varepsilon_k\}_{k\geq 1}\subset E$ of vectors which belong to $F_0$, then we set $e_n:=\varepsilon_{2n}$ and $e'_n:=i_E(\varepsilon_{2n+1})$. Since the subspace $F_0$ is isotropic, the vectors $e'_n$ do not belong to $F_0$, hence (\ref{Pene'n}) holds. Finally (\ref{Pene'nBCD}) holds due to the definition of the involution $i_E$. This completes the construction of the double sequence of (\ref{ene'n}).

Let $\H'\subset \G$ be the splitting Cartan subgroup associated to the double sequence $\{e_n,e'_n\}_{n\geq 1}$ as in Example \ref{E2}.
For every $g\in\G$, since
we have $g(e)\not=e$ for only finitely many $e\in E$, there is $n\geq 1$ such that $g(e_n)=e_n$ and $g(e'_n)=e'_n$. Then (\ref{PH'}) and (\ref{Pene'n})
yield a subspace $F\in\Ff$ and an element $h\in\H'$ such that
\[e_n\in F\quad\mbox{and}\quad ghg^{-1}(e_n)=e'_n\notin F,\quad\mbox{hence}\quad ghg^{-1}(F)\not=F.\]
This establishes that, for all $g\in\G$, we have $g\H'g^{-1}\not\subset\Stab_\G(\Ff)=\P$. The proof of the proposition is complete.
\end{proof}

\section{Exhaustion of $\X$ and a key lemma}

\label{section-4}

In this section we consider an ind-variety $\X$ of the form (\ref{X}). Our analysis is based on the following assumption: 
\begin{equation}
\label{all-but-one}
\mbox{the splitting parabolic subgroups $\P_1,\ldots,\P_{\ell-1}$ are large.}
\end{equation}

\subsection{Exhaustion}

\label{exhaustion}

Here we explain how to construct a natural exhaustion of the ind-variety $\X$. The notation introduced here is used in the subsequent sections.

For every $i\in\{1,\ldots,\ell\}$ there is a generalized flag $\Ff_i$ such that $\P_i=\Stab_\G(\Ff_i)$ (see Proposition \ref{P0}), hence $\X_i:=\G/\P_i=\G\cdot\Ff_i$. Let $E$ be an admissible basis of $V$ such that $\Ff_\ell$ is $E$-compatible. By Proposition \ref{P1bis}, for every $i\in\{1,\ldots,\ell-1\}$ we can find an element $h_i\in\G$ such that $h_i\Ff_i$ is $E$-compatible.
In view of the isomorphisms
\[
\G/\P_i\stackrel{\sim}{\to} \G/h_i\P_ih_i^{-1}\quad\mbox{for $1\leq i\leq \ell-1$},
\]
up to replacing $\P_i$ by $h_i\P_ih_i^{-1}=\Stab_\G(h_i\Ff_i)$, we may assume that
\[\mbox{$\Ff_1,\ldots,\Ff_\ell$ are $E$-compatible},\]
that is, 
\[\mbox{the splitting Cartan subgroup $\H(E)$ is contained in $\P_1,\ldots,\P_\ell$}.\]

Take a filtration of the basis 
\[E=\bigcup_{n\geq 1}E_n\]
by finite subsets (stabilized by $i_E$ in type (BCD)).
As in Sections \ref{section-1.1} and \ref{section-1.2}, we let
\[\G(E_n):=\G\cap \GL(\langle E_n\rangle)\quad\mbox{and}\quad \P_i(E_n):=\P_i\cap\GL(\langle E_n\rangle)
\]
which are respectively a classical algebraic group and a parabolic subgroup. In fact, since we are dealing with a single admissible basis $E$, it is harmless to avoid the reference to $E$ in the notation; we set for simplicity $\G(n):=\G(E_n)$ and $\P_i(n):=\P_i(E_n)$.

For each $i\in\{1,\ldots,\ell\}$, 
we follow the construction made in Section \ref{section-2.3}:
the generalized flag $\Ff_i$ gives rise to a flag in the finite-dimensional subspace $\langle E_n\rangle$, namely
let $\Ff_i(n)$ be the collection of subspaces
\[\Ff_i(n):=\{F\cap \span{E_n}\}_{F\in\Ff_i}.\]
The (finite-dimensional) algebraic variety 
\[
\X_i(n):=\G(n)/\P_i(n)=\G(n)\cdot\Ff_i(n)
\]
can be viewed as the set $\X_{\Ff_i}(n)$ of collections of subspaces of $\span{E_n}$ described in
Section \ref{section-2.3}.
(It is isomorphic in a natural way to a partial flag variety of the space $\span{E_n}$; see Remark \ref{R-section2.3}.)

For each $n\geq 1$, we have the embedding
\[
\phi_i(n):\X_i(n)\hookrightarrow\X_i(n+1),\ \{M_F\}_{F\in \Ff_i}\mapsto\{N_F\}_{F\in\Ff_i}
\]
given by
\[
N_F=M_F\oplus(F\cap \span{E_{n+1}\setminus E_n})\quad\mbox{for all $F\in\Ff_i$.}
\]
Finally, for each $i\in\{1,\ldots,\ell\}$, the ind-variety $\X_i=\G/\P_i=\G\cdot\Ff_i$ is obtained as the limit of the inductive system
\[
\X_i(1)\hookrightarrow\X_i(2)\hookrightarrow\cdots\hookrightarrow\X_i(n)\hookrightarrow\X_i(n+1)\hookrightarrow\cdots,
\]
thus we have an exhaustion
\[\X_i=\bigcup_{n\geq 1}\X_i(n).\] 
Altogether, we have the following exhaustion of the ind-variety $\X$:
\[\X=\bigcup_{n\geq 1}\X(n),\quad\X(n):=\X_1(n)\times\cdots\times\X_\ell(n),\]
where for each $n$ we consider the embedding
\begin{equation}
\label{phi(n)}
\phi(n):=\prod_{i=1}^\ell\phi_i(n):\X(n)\hookrightarrow\X(n+1).
\end{equation}

\begin{remark} 
The construction presented in this section is only possible in the case where the ind-varieties $\X_1,\ldots,\X_\ell$ have points $\Ff_1,\ldots,\Ff_\ell$ which admit a common compatible basis.\ Assumption (\ref{all-but-one}) (combined with Proposition \ref{P1bis}) is crucial in this respect.
\end{remark}

\subsection{Key lemma} 
\label{subsection-keylemma}

We still assume (\ref{all-but-one}).
Our key lemma is as follows.

\begin{lemma}
\label{L-key}
Let $\phi(n):\X(n)\hookrightarrow\X(n+1)$ be the embedding (\ref{phi(n)}).
Let $\Ff,\Ff'\in\X(n)$. Assume that $\phi(n)(\Ff)$ and $\phi(n)(\Ff')$ belong to the same $\G(n+1)$-orbit. Then $\Ff$ and $\Ff'$ belong to the same $\G(n)$-orbit.

Consequently, for every $n\geq 1$, the embedding $\X(n)\hookrightarrow \X$ induces an injection of orbit sets $\X(n)/\G(n)\hookrightarrow \X/\G$.
\end{lemma}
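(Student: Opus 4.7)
The plan is to translate the data of a point $\Ff^\circ=(\Ff_1^\circ,\ldots,\Ff_\ell^\circ)\in\X(n)$, with $\Ff_i^\circ=\{M_F^i\}_{F\in\Ff_i}$, into a finite-dimensional representation $R(\Ff^\circ)$ of a star-shaped quiver $Q$, and then to invoke the Krull--Schmidt cancellation theorem. The quiver $Q$ has a central vertex carrying the ambient space $V_n=\langle E_n\rangle$ and, for each $i$ and each non-trivial $F\in\Ff_i$, a leg vertex carrying $M_F^i$; the arrows are the chain inclusions within each $\Ff_i$ together with the inclusions into the central vertex. Two points $\Ff^\circ,\Ff'^\circ\in\X(n)$ belong to the same $\G(n)$-orbit if and only if $R(\Ff^\circ)\cong R(\Ff'^\circ)$, since an isomorphism of these representations is exactly the data of a central isomorphism that carries leg subspaces to leg subspaces; in types (BCD) the form $\omega$ adds a compatible self-duality that forces the central isomorphism to lie in $\G(n)$.

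The key observation is that $\phi(n)$ corresponds, at the level of representations, to taking the direct sum with a fixed representation. Let $T$ be the representation of $Q$ on $V_n'=\langle E_{n+1}\setminus E_n\rangle$ whose $(F,i)$-leg is $F\cap V_n'$. By the very definition of $\phi_i(n)$, one has $R(\phi(n)(\Ff^\circ))=R(\Ff^\circ)\oplus T$, a literal direct sum at every vertex. In types (BCD), the stability of $E_n$ under the involution $i_E$ guarantees that the decomposition $V_{n+1}=V_n\oplus V_n'$ is orthogonal and that $\omega$ restricts nondegenerately to both summands; accordingly, $R(\phi(n)(\Ff^\circ))$ is the orthogonal direct sum of $R(\Ff^\circ)$ and the self-dual representation $T$.

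Now assume $\phi(n)(\Ff)$ and $\phi(n)(\Ff')$ lie in the same $\G(n+1)$-orbit, equivalently $R(\Ff)\oplus T\cong R(\Ff')\oplus T$. The category of finite-dimensional representations of $Q$ over $\C$ is Krull--Schmidt and hence enjoys the cancellation property $R\oplus T\cong R'\oplus T\Rightarrow R\cong R'$; the analogous statement holds for the self-dual representations relevant to types (BCD). Cancelling $T$ yields $R(\Ff)\cong R(\Ff')$, and therefore $\Ff,\Ff'$ lie in the same $\G(n)$-orbit. The main obstacle I foresee is the careful verification of cancellation in types (BCD): one must either invoke Krull--Schmidt for representations of a quiver with involution, or alternatively cancel first in type (A) and then use Witt's extension theorem to upgrade the resulting central isomorphism to a form-preserving one.

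The consequence then follows by iteration. If $\Ff,\Ff'\in\X(n)$ have equal images in $\X/\G$, there is $g\in\G$ identifying them in $\X$; since $\G=\bigcup_{m\geq n}\G(m)$, we may choose $m\geq n$ with $g\in\G(m)$, so the images of $\Ff,\Ff'$ in $\X(m)$ lie in the same $\G(m)$-orbit. Applying the first part of the lemma $(m-n)$ successive times places $\Ff$ and $\Ff'$ in the same $\G(n)$-orbit, yielding the injection $\X(n)/\G(n)\hookrightarrow\X/\G$.
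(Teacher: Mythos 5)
Your proposal is correct in substance, but for the type (A) core it takes a genuinely different route from the paper's. The paper reduces to $E_{n+1}=E_n\cup\{e\}$ and proves the cancellation by an explicit linear-algebra argument (Lemma \ref{L2}): starting from $g\in\GL(N)$ it constructs, via projections and an auxiliary correction $\eta$ forcing $g(L)=L$, an element $h\in\GL(M)$ matching the two collections. You instead encode a point of $\X(n)$ as a representation of a star quiver, observe that $\phi(n)$ is direct sum with a fixed representation $T$, and cancel $T$; this is slicker and explains why the lemma holds, at the cost of importing Krull--Schmidt machinery, while the paper's argument is elementary and self-contained. Two points in your version need shoring up. First, only $\P_1,\ldots,\P_{\ell-1}$ are assumed large, so $\Ff_\ell$ may be an infinite chain and your quiver has infinitely many vertices (this is exactly why the paper's Lemma \ref{L2} allows an arbitrary index set $I$); finite-length Krull--Schmidt does not literally apply, but cancellation survives because the leg components of any endomorphism of $T$ are restrictions of the central one, so $\mathrm{End}(T)$ embeds into $\mathrm{End}(\span{E_{n+1}\setminus E_n})$ and is finite dimensional, whence $T$ is a finite direct sum of objects with local endomorphism rings and cancels by Krull--Schmidt--Azumaya. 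You should say this. Second, in type (BCD) the upgrade from a $\GL$-isomorphism to a form-preserving one is not plain Witt extension: one must simultaneously match a whole collection of subspaces closed under $F\mapsto F^\perp$, and the statement needed is the paper's Lemma \ref{L3}, i.e.\ Proposition 2.1 of \cite{MWZ2}. With that citation your ``cancel in type (A), then upgrade'' alternative coincides with the paper's treatment of (BCD). Your final iteration deducing the injection $\X(n)/\G(n)\hookrightarrow\X/\G$ is fine.
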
 


In type (A), we may suppose that
\[E_{n+1}=E_n\cup\{e\}.\]
Hence $M:=\span{E_n}$ is a hyperplane of $N:=\span{E_{n+1}}=M\oplus\span{e}$.
Every element of the variety $\X(n)$ (respectively $\X(n+1)$) consists of a collection of subspaces
$\{M_i\}_{i\in I}$ of $M$ (respectively $\{N_i\}_{i\in I}$ of $N$), and the map
$\phi(n):\X(n)\to\X(n+1)$ is of the form
\[
\phi(n):\{M_i\}\mapsto\{N_i\}\quad\mbox{with}\quad N_i=\left\{\begin{array}{ll}
M_i & \mbox{if $i\in I_0$,} \\
M_i\oplus\span{e} & \mbox{if $i\in I\setminus I_0$,}
\end{array}\right.
\]
for some subset $I_0\subset I$.
Therefore, in type (A) Lemma \ref{L-key} follows from the following lemma from linear algebra.

\begin{lemma}
\label{L2}
Let $N$ be a finite-dimensional complex vector space, $M\subset N$ a hyperplane, $L\subset N$ a line such that $N=M\oplus L$.
Let $\{M_i\}_{i\in I}$ and $\{M'_i\}_{i\in I}$ be two collections of subspaces of $M$, indexed by an arbitrary set $I$. Given a subset $I_0\subset I$, assume that there exists a $g\in \GL(N)$ such that
\[g(M_i)=M'_i\ \ \forall i\in I_0\quad\mbox{and}\quad g(M_i\oplus L)=M'_i\oplus L\ \ \forall i\in I\setminus I_0.\]
Then there is $h\in \GL(M)$ satisfying
\[h(M_i)=M'_i\quad \forall i\in I.\]
\end{lemma}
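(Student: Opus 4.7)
The plan is to write $g$ in block form with respect to $N = M \oplus L$, read off the algebraic content of the hypothesis, and then construct $h$ as a rank-at-most-one perturbation of the upper-left block.

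Fix a generator $e$ of $L$ and decompose $g(m + \lambda e) = (Am + \lambda b) + (c(m) + \lambda d)\,e$, with $A \in \mathrm{End}(M)$, $b \in M$, $c \in M^*$, $d \in \C$. For $i \in I_0$ the inclusion $g(M_i) = M'_i \subset M$ immediately forces $c|_{M_i} = 0$ and $A(M_i) = M'_i$. For $i \in I\setminus I_0$, expanding $g(M_i \oplus L) = M'_i \oplus L$ and comparing projections to $M$ and $L$ gives $A(M_i) + \span{b} = M'_i$ and $c(M_i) + \C d = \C$.

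I would then define, for a parameter $\mu \in \C$, the map $h_\mu : M \to M$ by $h_\mu(m) := Am + \mu\, c(m)\, b$. For $i \in I_0$ the vanishing of $c|_{M_i}$ makes $h_\mu(M_i) = A(M_i) = M'_i$ automatically. For $i \in I\setminus I_0$ with $c|_{M_i} \neq 0$, any $\mu \neq 0$ yields $h_\mu(M_i) = A(M_i) + \span{\mu b} = A(M_i) + \span{b} = M'_i$. Invertibility is controlled by the matrix determinant lemma, $\det h_\mu = \det A + \mu\,(c \cdot \mathrm{adj}(A) \cdot b)$, combined with the Schur complement identity $\det g = d \det A - c \cdot \mathrm{adj}(A) \cdot b$: the resulting affine function of $\mu$ cannot vanish identically (that would force $\det g = 0$), so all but at most one value of $\mu$ makes $h_\mu$ invertible, and I pick any nonzero $\mu$ in the complement.

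The delicate case is $i \in I \setminus I_0$ with $c|_{M_i} = 0$, where my formula merely gives $h_\mu(M_i) = A(M_i)$, a priori properly contained in $M'_i = A(M_i) + \span{b}$. Here I would use injectivity of $g$ on $M_i \oplus L$: the constraint $c(M_i) + \C d = \C$ forces $d \neq 0$, and then $g(m + \lambda e) = 0$ implies first $\lambda = 0$ from the $L$-coordinate, and then $Am = 0$, showing that $A|_{M_i}$ is injective. A dimension count yields $A(M_i) = M'_i$, so $b$ automatically lies in $A(M_i)$ and the formula already produces the right image. Establishing this auxiliary injectivity step is the place where I expect the argument to need the most care; the remaining verifications are direct.
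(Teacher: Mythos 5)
Your block-decomposition strategy is genuinely different from the paper's proof (which first reduces to the case $g(L)=L$ by composing $g$ with an auxiliary automorphism of $N$ fixing $\sum_{i\in I_0}M_i$ pointwise, and then takes $h=p\circ g|_M$), and it can be made to work --- but one step is false as written. For $i\in I\setminus I_0$ with $c|_{M_i}\neq 0$ you assert that any $\mu\neq 0$ gives $h_\mu(M_i)=A(M_i)+\mathbb{C}b$. The image of the linear map $m\mapsto Am+\mu c(m)b$ on $M_i$ is only \emph{contained} in $A(M_i)+\mathbb{C}b$ (the image of a sum of two maps is not the sum of the images), and it can be a proper subspace for particular nonzero $\mu$. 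Concretely, take $N=\mathbb{C}^3$ with basis $f_1,f_2,e$, $M=\langle f_1,f_2\rangle$, $L=\mathbb{C}e$, and $g(f_1)=f_1+e$, $g(f_2)=f_2$, $g(e)=f_1$, so that $A=\mathrm{id}_M$, $c=f_1^*$, $b=f_1$, $d=0$ and $\det g=-1$. With $I=\{1\}$, $I_0=\emptyset$, $M_1=\langle f_1\rangle$ one checks $g(M_1\oplus L)=\langle f_1\rangle\oplus L$, yet $h_{-1}(f_1)=f_1-f_1=0$, so $h_{-1}(M_1)=0\neq M_1'$. Ironically, the case you flagged as delicate ($c|_{M_i}=0$) is handled correctly; it is this other case whose justification breaks.

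The repair is short and in fact collapses your case analysis. Since $g$ is an isomorphism, $g(M_i\oplus L)=M_i'\oplus L$ forces $\dim M_i=\dim M_i'$ for $i\notin I_0$, and likewise $g(M_i)=M_i'$ does for $i\in I_0$. Moreover $h_\mu(M_i)\subseteq M_i'$ holds for every $\mu$ and every $i$: for $i\in I_0$ because $c|_{M_i}=0$ gives $h_\mu|_{M_i}=A|_{M_i}=g|_{M_i}$, and for $i\notin I_0$ because $h_\mu(M_i)\subseteq A(M_i)+\mathbb{C}b=M_i'$. Hence for any $\mu$ making $h_\mu$ invertible --- all but at most one value, by your determinant computation, which is correct --- one gets $\dim h_\mu(M_i)=\dim M_i=\dim M_i'$ and therefore $h_\mu(M_i)=M_i'$ for all $i$ simultaneously, with no separate discussion of whether $b\in A(M_i)$, whether $c|_{M_i}$ vanishes, or whether $\mu\neq 0$. (In the counterexample above, $\mu=-1$ is exactly the root of $\det h_\mu$, which is why your final choice of $\mu$ happens to survive there; but your stated argument does not establish this in general, and the uniformity in $i$ matters since $I$ may be infinite.) With this dimension count inserted, your proof is correct and arguably more economical than the paper's two-case geometric reduction.
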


\begin{proof}[Proof of Lemma \ref{L2}]
Let $p:N=M\oplus L\to M$ denote the linear projection.
First, assume that $g(L)=L$. Then the linear map
\[
h:=p\circ g|_M
\]
is an element of $\GL(M)$. For $i\in I_0$ we have
\[M'_i=p(M'_i)=p(g(M_i))=h(M_i),\]
and for $i\in I\setminus I_0$ we have
\[M'_i=p(M'_i\oplus L)=p(g(M_i\oplus L))=p(g(M_i)\oplus L)=p(g(M_i))=h(M_i).\]
The lemma is proved in this case. 

Thus it remains to consider the case where
\[g(L)\not=L.\]
Then $K:=L+g(L)$ is a $2$-dimensional subspace of $N$.

The sums 
\[M_0:=\sum_{i\in I_0}M_i \qquad\mbox{and}\qquad M'_0:=\sum_{i\in I_0}M'_i\]
are subspaces of $M$, and by the properties of $g$ we have $g(M_0)=M'_0$, so $\dim M_0=\dim M'_0$.
Hence we can find $h_0\in\GL(M)$ such that $h_0(M'_0)=M_0$. Let $h':=h_0\oplus\mathrm{id}_L\in\GL(M\oplus L)=\GL(N)$. For every $i\in I$ we set $M''_i:=h_0(M'_i)$. Then
\[
h'(g(M_i))=h'(M'_i)=M''_i\ \ \forall i\in I_0
\]
and
\[
h'(g(M_i\oplus L))=h'(M'_i\oplus L)=M''_i\oplus L\ \ \forall i\in I\setminus I_0.
\]
Moreover, $h'(g(M_0))=M_0$. Therefore, by dealing with $\{M''_i\}_{i\in I}$ instead of $\{M'_i\}_{i\in I}$, we can assume that 
\[M_0=M'_0,\quad \mbox{i.e.,}\quad g(M_0)=M_0.\]
Let $e\in L$, $e\not=0$.
We distinguish two cases.

\medskip
\noindent
{\it Case 1:} $K\cap M_0=0$.

Let $\{e_1,\ldots,e_r\}$ be a basis of $M_0$. Then the vectors $e_1,\ldots,e_r,e,g(e)$ are linearly independent, and we can find vectors $e_{r+3},\ldots,e_d$ such that
\[\{e_1,\ldots,e_r,e,g(e),e_{r+3},\ldots,e_d\}\mbox{ is a basis of $N$.}\]
Let $\eta\in\GL(N)$
satisfy $\eta(e_k)=e_k$ for all $k\in\{1,\ldots,r,r+3,\ldots,d\}$, $\eta(e)=g(e)$, and $\eta(g(e))=e$. In particular, $\eta|_{M_0}=\mathrm{id}_{M_0}$, hence \[\eta(g(M_i))=\eta(M'_i)=M'_i\]
for all $i\in I_0$.
Note also that
\begin{equation}
\label{eta}
N=K\oplus \span{e_1,\ldots,e_r,e_{r+3},\ldots,e_d}=K+\ker(\eta-\mathrm{id}_{N}).
\end{equation} 
For all $i\in I\setminus I_0$, we have $M'_i\oplus L=g(M_i\oplus L)$, therefore the subspace $M'_i\oplus L$ contains $L+g(L)=K$. By (\ref{eta}), and since $\eta(K)=K$, this implies that the subspace $M'_i\oplus L$ is $\eta$-stable. Hence,
\[\eta(g(M_i\oplus L))=\eta(M'_i\oplus L)=M'_i\oplus L.\]
Since $\eta(g(e))=e$, i.e. $\eta\circ g(L)=L$, this brings us back to the situation treated at the beginning of the proof.

\medskip
\noindent
{\it Case 2:} $K\cap M_0\not=0$.

As $L\not\subset M_0$, we have $K\not\subset M_0$, and consequently $\dim K\cap M_0=1$ in this case.
Note also that $g(e)\notin g(M_0)=M_0$. Hence $K\cap M_0=\span{e_1}$ for some vector $e_1$ which also satisfies $K=\span{e,e_1}=\span{g(e),e_1}$. Let $\{e_1,\ldots,e_r\}$ be a basis of $M_0$ containing the vector $e_1$. Then $\{e_1,\ldots,e_r,e\}$ and $\{e_1,\ldots,e_r,g(e)\}$ are two bases of $M_0+K$. We extend them into bases of $N$ by adding a common set of vectors $e_{r+2},\ldots,e_{d}$. Let $\eta\in\mathrm{GL}(N)$ satisfy $\eta(e_k)=e_k$ for all $k$ and $\eta(g(e))=e$. Then $\eta|_{M_0}=\mathrm{id}_{M_0}$, and this implies
\[\eta(g(M_i))=\eta(M'_i)=M'_i\ \ \forall i\in I_0.\]
Moreover, $\eta$ satisfies
\begin{equation}
\label{etabis}
N=K\oplus \span{e_2,\ldots,e_r,e_{r+2},\ldots,e_{d}}=K+\ker (\eta-\mathrm{id}_{N}).
\end{equation}
For every $i\in I\setminus I_0$,
we have $K\subset M'_i\oplus L=g(M_i\oplus L)$. In view of (\ref{etabis}), and since $\eta(K)=K$, we deduce that
the subspace $M'_i\oplus L$ is $\eta$-stable, hence
\[\eta(g(M_i\oplus L))=\eta(M'_i\oplus L)=M'_i\oplus L\ \ \forall i\in I\setminus I_0.\]
Since $\eta(g(L))=L$, again we are brought back to the situation already treated at the beginning of the proof. The proof of the lemma is now complete.
\end{proof}


In type (BCD), with the notation of Lemma \ref{L-key}, we have already the implications
\begin{eqnarray*}
& & \mbox{$\phi(n)(\Ff)$ and $\phi(n)(\Ff')$ belong to the same $\G(n+1)$-orbit} \\
& \Rightarrow & \mbox{$\phi(n)(\Ff)$ and $\phi(n)(\Ff')$ belong to the same $\GL(\span{E_{n+1}})$-orbit} \\
& \Rightarrow & \mbox{$\Ff$ and $\Ff'$ belong to the same $\GL(\span{E_n})$-orbit,}
\end{eqnarray*}
where the last implication is valid since we have already proved Lemma \ref{L-key} in type (A).
For completing the proof of Lemma \ref{L-key} in type (BCD), we have to show that $\Ff$ and $\Ff'$ belong to the same $\G(n)$-orbit.
This conclusion is deduced from the following general fact.

\begin{lemma}
\label{L3}
Let $M$ be a finite-dimensional linear space, endowed with a nondegenerate orthogonal or symplectic bilinear form $\omega$.
We consider the group 
\[G(M,\omega)=\{g\in\GL(M):\mbox{$g$ preserves $\omega$}\}.\]
Let 
$I$ be a set equipped with an involution $i\mapsto i^*$, and let
$\Ff=\{M_i\}_{i\in I}$ and $\Ff'=\{M'_i\}_{i\in I}$ be two collections of subspaces satisfying
\begin{eqnarray*}
 & d_i:=\dim M_i=\dim M'_i\quad\mbox{for all $i\in I$}, \\
 & M_i^\perp =M_{i^*}\in \Ff\quad\mbox{and}\quad M_i'^\perp=M'_{i^*}\in\Ff'\quad\mbox{for all $i\in I$}.
\end{eqnarray*}
Assume that there is $g\in \GL(M)$ with $g(M_i)=M'_i$ for all $i$. Then there is $h\in G(M,\omega)$ with $h(M_i)=M'_i$ for all $i$.
\end{lemma}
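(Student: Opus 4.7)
The strategy is to reduce the statement to the classical fact that isotropic partial flag varieties are homogeneous under the relevant classical group, and to prove that fact by induction via Witt's extension theorem.

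First, I would discard duplicates and list the distinct subspaces of $\Ff$ as a chain
\[
0=N_0\subsetneq N_1\subsetneq\cdots\subsetneq N_r=M,
\]
and analogously $0=N'_0\subsetneq\cdots\subsetneq N'_r=M$ for $\Ff'$. The existence of $g\in\GL(M)$ with $g(M_i)=M'_i$ forces $\dim N_j=\dim N'_j$, and the condition $M_i^\perp=M_{i^*}\in\Ff$ descends to $N_j^\perp=N_{r-j}$ (and similarly for the $N'_j$), since $\perp$ reverses strict inclusions and sends a $d$-dimensional subspace to one of dimension $\dim M-d$. As each $M_i$, $M'_i$ sits in the same position in its chain, it suffices to construct $h\in G(M,\omega)$ with $h(N_j)=N'_j$ for every $j$.

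I would then proceed by induction on $r$. The cases $r\le 1$ are trivial, so assume $r\ge 2$. The inclusion $N_1\subseteq N_{r-1}=N_1^\perp$ shows that $N_1$ is totally isotropic, and so is $N'_1$; since $\dim N_1=\dim N'_1$, any linear isomorphism $N_1\to N'_1$ is vacuously an isometry, and Witt's extension theorem lifts it to some $h_1\in G(M,\omega)$ with $h_1(N_1)=N'_1$. Applying the perp operation yields $h_1(N_{r-1})=h_1(N_1)^\perp=N'_{r-1}$ automatically. Replacing $\Ff'$ by $h_1^{-1}(\Ff')$ (and $g$ by $h_1^{-1}g$) we may now assume $N'_1=N_1$ and $N'_{r-1}=N_{r-1}$.

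Finally, $\omega$ descends to a nondegenerate form $\bar\omega$ of the same symmetry type on $Q:=N_1^\perp/N_1=N_{r-1}/N_1$, and the images $\bar N_j:=N_j/N_1$, $\bar N'_j:=N'_j/N_1$ for $1\le j\le r-1$ form chains of length $r-2$ in $Q$, closed under the induced $\bar\omega$-perp. The reduced $g$ descends to a linear automorphism of $Q$ sending $\bar N_j$ to $\bar N'_j$, so the induction hypothesis applied to $(Q,\bar\omega)$ yields $\bar h_2\in G(Q,\bar\omega)$ with $\bar h_2(\bar N_j)=\bar N'_j$ for all $j$. It remains to lift $\bar h_2$ to an element $h_2\in G(M,\omega)$ which fixes $N_1$ pointwise (and therefore preserves $N_{r-1}=N_1^\perp$) and induces $\bar h_2$ on $Q$: this is the main technical step, and it follows from the Levi decomposition of the parabolic $\Stab_{G(M,\omega)}(N_1)$. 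Concretely, one chooses a decomposition $M=N_1\oplus Q_0\oplus U$ in which $Q_0\subseteq N_1^\perp$ projects isomorphically onto $Q$, the subspace $U$ is isotropic, $N_1+U$ is nondegenerate with $N_1$ paired hyperbolically with $U$, and $Q_0\perp(N_1\oplus U)$; then $h_2:=\mathrm{id}_{N_1}\oplus \tilde h_2\oplus\mathrm{id}_U$ (with $\tilde h_2$ the copy of $\bar h_2$ on $Q_0$) is directly seen to lie in $G(M,\omega)$, to fix $N_1$ and $N_{r-1}$, to induce $\bar h_2$ on $Q$, and hence to satisfy $h_2(N_j)=N'_j$ for $1\le j\le r-1$. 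Composing, $h:=h_1\circ h_2\in G(M,\omega)$ realizes $h(N_j)=N'_j$ for all $j$, completing the induction.
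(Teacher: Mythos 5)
Your argument begins by listing the distinct subspaces of $\Ff$ as a single chain $0=N_0\subsetneq N_1\subsetneq\cdots\subsetneq N_r=M$, but the lemma imposes no such total-ordering hypothesis: $\Ff=\{M_i\}_{i\in I}$ is an arbitrary collection of subspaces closed under $\perp$, and in the paper it is applied precisely to collections that are \emph{not} chains --- in Section \ref{section-6} to the union of the subspaces of an ordered pair of isotropic flags, and in the proof of Lemma \ref{L-key} to an $\ell$-tuple of flags. For a union of several chains your reduction is unavailable, and the inductive strategy does not survive: after using Witt's theorem to match the smallest member $N_1$ of one chain with $N_1'$, you cannot pass to $N_1^\perp/N_1$, because the subspaces belonging to the other chains need neither contain $N_1$ nor lie in $N_1^\perp$. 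In the multi-flag case the content of the lemma is that a $\GL(M)$-orbit of self-perpendicular configurations meets the set of self-perpendicular configurations in a \emph{single} $G(M,\omega)$-orbit; this is genuinely stronger than transitivity of $G(M,\omega)$ on isotropic flags of a fixed type, which is all your induction establishes.

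For the single-chain case your proof is correct (Witt's extension theorem plus the hyperbolic-complement decomposition is a standard route to that classical transitivity statement). To obtain the general statement, the paper instead places the involution $\sigma:\{M_i\}_{i\in I}\mapsto\{M_{i^*}^\perp\}_{i\in I}$ on the set $X$ of all configurations with the given dimension vector and the compatible involution $g\mapsto (g^*)^{-1}$ on $G=\GL(M)$, so that $G^\sigma=G(M,\omega)$ and $\Ff,\Ff'\in X^\sigma$, and then quotes \cite[Proposition 2.1]{MWZ2}, which asserts that two elements of $X^\sigma$ lying in the same $G$-orbit lie in the same $G^\sigma$-orbit. To repair your proof you would need either to invoke such a result or to give an argument valid for arbitrary $\perp$-closed collections, not just totally ordered ones.
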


\begin{proof}
We define $X$ to be the set of collections of subspaces $\{M_i\}_{i\in I}$ with  $\dim M_i=d_i$ for all $i$, and we consider the action of $G:=\GL(M)$ on $X$ given by
\[g\cdot \{M_i\}_{i\in I}=\{g(M_i)\}_{i\in I}.\]
Note that $X$ is endowed with the involution
\[\sigma:X\to X,\ \{M_i\}_{i\in I}\mapsto \{M_{i^*}^\perp\}_{i\in I}.\]
Let $X^\sigma$ be the fixed point set of this involution. Then $\mathcal{F}$ and $\Ff'$ are elements of $X^\sigma$.

Let $u^*$  denote the adjoint morphism of an endomorphism $u\in\mathrm{End}(M)$ with respect to the form $\omega$. Thus $G$ is also endowed with an involution given by
\[G\to G,\ g\mapsto g^\sigma:=(g^*)^{-1},\]
and $G(M,\omega)$ coincides with the subgroup $G^\sigma$ of fixed points of this involution. Then the claim made in the statement follows once we show that two elements of $X^\sigma$ are $G^\sigma$-conjugate whenever they are $G$-conjugate. This is exactly \cite[Proposition 2.1]{MWZ2} (conditions (1)--(3) of \cite[Proposition 2.1]{MWZ2} are clearly verified).
\end{proof}

\section{Proof of the direct implication in Theorem \ref{T1}}

\label{section-5}
Arguing indirectly, assume that $(\P_1,\P_2)$ is a pair of splitting parabolic subgroups which does not satisfy the condition of Theorem \ref{T1}, namely, up to exchanging the roles of $\P_1$ and $\P_2$, we may assume that
\begin{description}
\item[Case 1] $\P_1$ is not semilarge, or
\item[Case 2] $\P_1,\P_2$ are semilarge but not large.
\end{description}
Considering generalized flags $\Ff_1,\Ff_2$ such that $\P_1=\Stab_\G(\Ff_1)$ and $\P_2=\Stab_\G(\Ff_2)$, the condition of Case 1 means that
\begin{equation}
\label{5.1}
\mbox{$\Ff_1$ has an infinite number of subspaces.}
\end{equation}
The condition of Case 2 implies that
\begin{equation}
\label{5.2}
\parbox{12cm}{in each generalized flag $\Ff_1$ and $\Ff_2$, there is at least one subspace which is both infinite dimensional and infinite codimensional.}
\end{equation}

We will show that $\X=\G/\P_1\times \G/\P_2$ has infinitely many $\G$-orbits.
Since the map
\[\P_1\cdot(g\P_2)\mapsto \G\cdot(\P_1,g\P_2)\]
is a bijection
between the set of $\P_1$-orbits on $\G/\P_2$ and the set of $\G$-orbits on $\X$, it suffices to show that $\G/\P_2$ has infinitely many $\P_1$-orbits.

Let $F_2\in\Ff_2$ be such that $0\subsetneq F_2\subsetneq V$. In Case 2, by virtue of (\ref{5.2}), we assume that $F_2$ is infinite dimensional and infinite codimensional. In type (BCD) we assume that $F_2\subset F_2^\perp$. Recall
from Example \ref{E1}\,{\rm (b)} that the generalized flag associated to $F_2$ is given by
\[
\Ff_{F_2}=\left\{
\begin{array}{ll}
\{0\subset F_2\subset V\} & \mbox{in type (A),} \\
\{0\subset F_2\subset F_2^\perp\subset V\} & \mbox{in type (BCD).} \\
\end{array}
\right.
\]
By replacing the parabolic subgroup $\P_2$ by the larger splitting parabolic subgroup $\hat\P_2:=\Stab_\G(\mathcal{F}_{F_2})$, we may assume that $\Ff_2=\Ff_{F_2}$.

We fix an admissible basis $E$ such that $\Ff_2$ is $E$-compatible.
Then the ind-variety $\G/\P_2=\G\cdot\Ff_2$ consists of generalized flags which are $E$-commensurable
with $\Ff_2$, in particular are of the form $\Ff_F$ for $F\subset V$.
Our aim is to construct an infinite sequence of such generalized flags which belong to pairwise distinct $\P_1$-orbits.
By a slight abuse of terminology, we say that a subspace $F$ is weakly $E$-compatible if its associated generalized flag $\Ff_F$ is weakly $E$-compatible. Also, we say that $F'$ is $E$-commensurable with $F$ if $\Ff_{F'}$ is $E$-commensurable
with $\Ff_F$. 

\begin{lemma}
\label{L4-T1}
Let $F$ be weakly $E$-compatible (with $F\subset F^\perp$ in type (BCD)). Let $\phi\in V_*$ satisfy $F\not\subset\ker \phi$. Fix $v\in V\setminus F$. 
In type (BCD) we assume in addition that the vector $v$ is isotropic and belongs to $(F\cap\ker\phi)^\perp$.
Then $F':=(F\cap \ker\phi)\oplus\C v$ is $E$-commensurable with $F$.
\end{lemma}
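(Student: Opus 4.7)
The plan is to verify that $F'$ is $E$-commensurable with $F$ by exhibiting an element $g\in\G$ such that $g\cdot\Ff_F=\Ff_{F'}$; this suffices because the $\G$-orbit $\G\cdot\Ff_F$ coincides with the $E$-commensurability class of $\Ff_F$ (see Section~\ref{section-2.3}), and weak $E$-compatibility is unaffected by replacing $E$ by a basis differing from $E$ in finitely many vectors.

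To set up, first replace $E$ by an admissible basis $\hat E$ differing from $E$ in finitely many vectors and making $\Ff_F$ $\hat E$-compatible; this is possible by the hypothesis on $F$. The crucial property is that $\phi\in V_*$ has finite support on $\hat E$: in type (A) this is immediate from the admissibility of $\hat E$, while in type (BCD) it follows by writing $\phi=\omega(u,\cdot)$ for the unique vector $u\in V$, which has finite expansion in $\hat E$. Choose a finite subset $\hat E_0\subset\hat E$ containing the support of $\phi$, the expansion of $v$, and some $e_0\in\hat E_F$ with $\phi(e_0)\neq 0$ (where $\hat E_F$ denotes the subset of $\hat E$ spanning $F$; such $e_0$ exists since $F\not\subset\ker\phi$); in type (BCD), close $\hat E_0$ under $i_{\hat E}$. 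Setting $V_1:=\span{\hat E_0}$ and $V_2:=\span{\hat E\setminus\hat E_0}$ yields a direct sum decomposition $V=V_1\oplus V_2$ (orthogonal in type (BCD), so that $\omega|_{V_1}$ is nondegenerate), and $V_2\subset\ker\phi$ by construction.

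The relevant subspaces now split compatibly: letting $F_2:=\span{\hat E_F\setminus\hat E_0}\subset V_2$, we have $F=(F\cap V_1)\oplus F_2$ and $F\cap\ker\phi=(F\cap V_1\cap\ker\phi)\oplus F_2$, which together with $v\in V_1$ yield $F'=(F'\cap V_1)\oplus F_2$ with $F'\cap V_1=(F\cap V_1\cap\ker\phi)\oplus\C v$. Since $\phi(e_0)\neq 0$, the subspace $F\cap V_1\cap\ker\phi$ is a hyperplane of $F\cap V_1$, and the fact that $v\notin F$ then gives $\dim(F\cap V_1)=\dim(F'\cap V_1)$. In type (BCD), the isotropy of $v$ together with $v\in(F\cap\ker\phi)^\perp$ imply that $F'\cap V_1$ is isotropic for $\omega|_{V_1}$. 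By Witt's extension theorem in type (BCD) (and trivially in type (A)) there exists $g_1\in\GL(V_1)$, preserving $\omega|_{V_1}$ in type (BCD), such that $g_1(F\cap V_1)=F'\cap V_1$. Extending $g_1$ by the identity on $V_2$ produces $g\in\G$ with $g(F)=(F'\cap V_1)\oplus F_2=F'$, and therefore $\Ff_{F'}=g\cdot\Ff_F$, as required.

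The main obstacle is the finite-dimensional bookkeeping: $\hat E_0$ must be chosen large enough to absorb the supports of $\phi$ and $v$ and to contain a witness $e_0\in\hat E_F$ with $\phi(e_0)\neq 0$, while in type (BCD) staying $i_{\hat E}$-stable so that $\omega|_{V_1}$ remains nondegenerate and Witt's theorem applies inside $V_1$. Once this is arranged, the rest reduces to routine linear algebra.
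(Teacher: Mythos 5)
Your proof is correct, but it takes a genuinely different route from the paper's. The paper verifies the definition of $E$-commensurability directly: it first shows $F'$ is weakly $E$-compatible by exhibiting a finite-codimensional subspace $\span{I\setminus J}\subset F'$ spanned by basis vectors (where $\span{I}\subset F$ has finite codimension and $J$ carries the support of $\phi$), and then produces an explicit finite-dimensional subspace $U$ containing $v$ and a complement vector $v'$ of $F\cap\ker\phi$ in $F$, witnessing $F+U=F'+U$ and $\dim F\cap U=\dim F'\cap U$. You instead build an element $g\in\G$ with $g(F)=F'$ by splitting $V=V_1\oplus V_2$ with $V_1$ a finite-dimensional (orthogonally closed) piece absorbing the supports of $\phi$ and $v$, applying Witt's theorem inside $V_1$, and then invoke the identification $\G\cdot\Ff_F=\Fl(\Ff_F,\hat E,V)$ (resp.\ $\Fl_\omega$) from Section \ref{section-2.3}. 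Your reduction of commensurability from $E$ to $\hat E$ is sound, since weak compatibility is insensitive to changing finitely many basis vectors and the second condition in the definition does not reference the basis at all. What your approach buys is uniformity across types and an explicit group element realizing the move from $F$ to $F'$; what it costs is reliance on the quoted orbit description of Dimitrov--Penkov as a black box, whereas the paper's argument is a self-contained check of the definition. Both arguments ultimately rest on the same finite-support observations about $\phi\in V_*$ and about $v$.
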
 

\begin{proof}
Clearly, a subspace 
is weakly $E$-compatible if and only if it has a finite-codimensional subspace spanned by a subset of $E$. 
Let $I\subset E$ be such that $F$ contains $\span{I}$ as a finite-codimensional subspace.
There is a finite set $J\subset E$ with $\phi\in\span{\phi_e:e\in J}$. Then $F'$ contains $\span{I\setminus J}$ as a finite-codimensional subspace. Hence $F'$ is weakly $E$-compatible.

Let a vector $v'$ satisfy $F=(F\cap \ker \phi)\oplus\mathbb{C}v'$. 
%
Then, we see that $\Ff_F$ and $\Ff_{F'}$ are $E$-commensurable by considering
any finite-dimensional subspace $U\subset V$ such that $v,v'\in U$ in type (A) and which satisfies in addition
$(F\cap\ker\phi)^\perp\cap U\not\subset\span{v}^\perp$
and $(F\cap\ker\phi)^\perp\cap U\not\subset\span{v'}^\perp$ in type (BCD).
\end{proof}

\begin{lemma}
\label{lemma-2-T1}
Let $L\subsetneq M$ and $F\not=0$ be subspaces of $V$, 
with $F$ weakly $E$-compatible. In type (BCD) we assume that these subspaces are isotropic.
In type (A) we assume that $L,M$ are of the form
\[
L=\bigcap_{\phi\in\Phi}\ker\phi\subsetneq M=\bigcap_{\phi\in\Psi}\ker\phi
\]
for subsets $\Psi\subsetneq \Phi\subset V_*$.
\begin{itemize}
\item[\rm (a)] If $M\not\subset F$ and $F\not\subset M$, then there is a subspace $F'\subset V$ which is $E$-commensurable with $F$, isotropic in type (BCD), and such that
\begin{eqnarray*}
 & \mbox{$F\cap M$ is a hyperplane of $F'\cap M$} \\
 & \mbox{and $F'/F'\cap M$ embeds as a hyperplane in $F/F\cap M$.}
\end{eqnarray*}
\item[\rm (b)] If $F\cap M\not\subset L$ and $F+M\not=V$, then there is a subspace $F'\subset V$ which is
$E$-commensurable with $F$, isotropic in type (BCD), and such that
\[
F'\cap L=F\cap L,\quad  \mbox{$F'\cap M$ is a hyperplane of $F\cap M$.}
\]
\item[\rm (c)] If $F\subset L$, then there is a subspace $F'\subset V$ which is
$E$-commensurable with $F$
and such that
$F'\not\subset L$, $F'\subset M$.
\end{itemize}
\end{lemma}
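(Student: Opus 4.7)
All three parts are proved by a uniform technique: we apply Lemma \ref{L4-T1} with carefully chosen $\phi \in V_*$ and $v \in V \setminus F$, producing $F' := (F \cap \ker \phi) \oplus \C v$, which is automatically $E$-commensurable with $F$ (and isotropic in type (BCD), provided $v$ is chosen isotropic and orthogonal to $F \cap \ker \phi$). The choice of $\phi$ determines which hyperplane of $F$ is cut, and the choice of $v$ determines which new direction is inserted; together these let us translate the combinatorial conditions on $L$ and $M$ into the required intersection identities.

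For part (a), the aim is to shift one dimension from $F \setminus M$ into $M$. In type (A), since $F \not\subset M = \bigcap_{\phi \in \Psi} \ker \phi$, some $\phi \in \Psi$ satisfies $\phi|_M \equiv 0$ and $\phi|_F \not\equiv 0$; in type (BCD), the hypothesis $F \not\subset M$ translates into $M^\perp \not\subset F^\perp$, so we pick $u \in M^\perp \setminus F^\perp$ and take $\phi := \omega(u,\cdot)$. Choose $v \in M \setminus F$ (possible since $M \not\subset F$), refined in type (BCD) to lie in $M \cap (F \cap \ker \phi)^\perp$ (a cofinite-codimensional subset of $M$, hence containing vectors outside the proper subspace $F$). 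Since $\phi|_M = 0$, one checks directly that $F' \cap M = (F \cap M) \oplus \C v$ is a codimension-one extension, while $F'/(F' \cap M) \cong (F \cap \ker \phi)/(F \cap M)$ identifies with the kernel of the surjection $F/(F \cap M) \twoheadrightarrow F/(F \cap \ker \phi) \cong \C$, i.e.\ a hyperplane of $F/(F \cap M)$.

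For part (b), we want to trim one dimension from $F \cap M$ while preserving $F \cap L$. Since $F \cap M \not\subset L$, we choose $\phi$ vanishing on $L$ but not on $F \cap M$: in type (A) take $\phi \in \Phi$ with $\phi|_{F \cap M} \neq 0$, and in type (BCD) take $\phi := \omega(u,\cdot)$ for $u \in L^\perp$ with $\omega(u, F \cap M) \neq 0$. The assumption $F + M \neq V$ lets us select $v \in V \setminus (F + M)$, refined in type (BCD) to be isotropic and in $(F \cap \ker \phi)^\perp$. Then $v \notin M + (F \cap \ker \phi)$ forces $F' \cap M = F \cap \ker \phi \cap M$, a hyperplane of $F \cap M$ by the choice of $\phi$; likewise $v \notin L + (F \cap \ker \phi)$ yields $F' \cap L = F \cap \ker \phi \cap L = F \cap L$, the last equality from $\phi|_L = 0$. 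Part (c) is the easiest: pick $v \in M \setminus L$ (nonempty as $L \subsetneq M$; automatically $v \notin F \subset L$) and any $\phi \in V_*$ with $\phi|_F \neq 0$ (exists since $V_*$ separates points of the nonzero space $F$), refined in type (BCD) so that $v$ is isotropic and in $(F \cap \ker \phi)^\perp$. Then $F' = (F \cap \ker \phi) \oplus \C v \subset L + \C v \subset M$, while $v \in F' \setminus L$ ensures $F' \not\subset L$.

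\textbf{Main obstacle.} The main technical difficulty lies in the type (BCD) case, where $F'$ must remain isotropic. This forces $v$ to simultaneously satisfy $\omega(v,v)=0$, $\omega(v, F \cap \ker\phi) = 0$, and a negative condition (lying outside $F$, or outside $F+M$, or outside $L$). The resolution is that the orthogonality and isotropy conditions define a subset that differs from a cofinite-codimensional linear subspace only by an isotropic cone, so it remains too large to be contained in the finitely many proper subspaces we need to avoid; selecting $u$ appropriately in the relevant $\perp$-space, and then $v$ inside the intersection of the isotropic cone with $M$ (or $V$), gives the required vector.
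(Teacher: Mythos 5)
Your type (A) arguments for (a) and (b), and part (c) in both types, are correct and essentially identical to the paper's. The genuine gap is in part (a), type (BCD). There you first fix $\phi=\omega(u,\cdot)$ for an \emph{arbitrary} $u\in M^\perp\setminus F^\perp$ and then claim that $M\cap(F\cap\ker\phi)^\perp$ is cofinite-codimensional in $M$ and hence meets the complement of $F$. Neither claim holds: the codimension of $M\cap(F\cap\ker\phi)^\perp$ in $M$ is the rank of the pairing between $M$ and $F\cap\ker\phi$, which can be infinite, and even a cofinite-codimensional subspace of $M$ could be contained in $F\cap M$. Concretely, in type (C) with symplectic basis $\{e_j,e_j^*\}_{j\ge1}$, take $F=\langle e_j: j\ge 1\rangle$ (Lagrangian and $E$-compatible), $M=\langle e_1+e_2^*\rangle$, $L=0$; then $M\not\subset F$ and $F\not\subset M$, and $u=e_2+e_1^*$ lies in $M^\perp\setminus F^\perp$, but $F\cap\ker\omega(u,\cdot)=\langle e_2,e_3,\dots\rangle$ while $\omega(e_1+e_2^*,e_2)=-1\ne0$, so $M\cap(F\cap\ker\phi)^\perp=0$ and no admissible $v$ exists for this $u$. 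The paper avoids this by reversing the order of the choices: pick $v\in M\setminus F$ first; if $v\notin F^\perp$, set $\phi=\omega(v,\cdot)$, so that $v\in(F\cap\ker\phi)^\perp$ holds automatically; only when $v\in F^\perp$ (in which case $v\perp F\supset F\cap\ker\phi$ for free) does one take $\phi=\omega(u,\cdot)$ with $u\in M^\perp\setminus F^\perp$.

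A milder weakness of the same kind occurs in part (b), type (BCD): your $\phi=\omega(u,\cdot)$ agrees with the paper's choice, but the existence of an isotropic $v\in(F\cap\ker\phi)^\perp\setminus(F+M)$ is again justified by the ``cofinite-codimensional subspace plus isotropic cone'' heuristic, which fails for the same reason (for $F$ Lagrangian, $(F\cap\ker\phi)^\perp$ has infinite codimension in $V$). The conclusion is nevertheless true and the paper proves it by an explicit construction: since $\omega(u,\cdot)$ is nonzero on $F\cap M$, one can choose $w\in F\cap M$ with $\omega(u+w,u+w)=0$ and set $v=u+w$; then $v\in F^\perp+\langle u\rangle\subset(F\cap\ker\phi)^\perp$, and $v\notin F+M$ because $u\notin(F\cap M)^\perp\supset F+M$ while $w\in F+M$. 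You should replace the genericity arguments in (a) and (b) by such explicit constructions.
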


\begin{proof}
{\rm (a)} Since $M\not\subset F$ we can find a vector $v\in M\setminus F$.
In type (A), since $F\not\subset M$, we can find $\phi\in\Psi$ such that $F\not\subset \ker\phi$.
In type (BCD), either $v\notin F^\perp$ and we take $\phi=\omega(v,\cdot)$,
or $v\in F^\perp$ and we take $\phi=\omega(v',\cdot)$ for any $v'\in M^\perp\setminus F^\perp$.
In all the cases, it follows from Lemma \ref{L4-T1} that the subspace
\[
F':=(F\cap \ker\phi)\oplus\span{v}
\]
is $E$-commensurable, isotropic in type (BCD), and satisfies
\[
F'\cap M=F\cap M\oplus\span{v}.
\]
Moreover, the linear projection $F'\to F\cap\ker\phi$ yields an isomorphism  $F'/F'\cap M\stackrel{\sim}{\to} F\cap\ker\phi/F\cap M$.

{\rm (b)} 
In type (A) we take a vector $v\in V\setminus (F+M)$ and we find $\phi\in\Phi$ such that $F\cap M\not\subset \ker\phi$.
By Lemma \ref{L4-T1}, the subspace
\begin{equation}
\label{F'}
F':=(F\cap\ker\phi)\oplus\span{v}
\end{equation}
is $E$-commensurable and satisfies
\[
F'\cap L=F\cap L,\quad F'\cap M=(F\cap M)\cap\ker\phi.
\]

In type (BCD) the construction is adapted as follows.
Since $F\cap M\not\subset L$, there is a vector $v'\in L^\perp$ such that $v'\notin(F\cap M)^\perp$,
that is, $F\cap M\not\subset\span{v'}^\perp$. We take $\phi:=\omega(v',\cdot)$.
The fact that $F\cap M\not\subset \span{v'}^\perp$ also implies that there is an isotropic vector of the
form $v=v'+w$ with $w\in F\cap M$.
Since $F\cap M\subset (F+M)^\perp$, we have $v'\notin F+M$, and hence $v\notin F+M$.
Moreover, 
$v\in F^\perp+\span{v'}=(F\cap \ker\phi)^\perp$.
By Lemma \ref{L4-T1}, the subspace $F'$ of (\ref{F'}) corresponding to this choice of $\phi$ and $v$ is isotropic and fulfills the required conditions.

{\rm (c)} This time the conditions are fulfilled by the subspace $F':=(F\cap\ker\phi)\oplus\span{v}$, where $v\in M\setminus L$ and $\phi$ is any element of $V_*$ such that $F\not\subset\ker\phi$.
\end{proof}

{\bf Claim 1:} If there is $F_1\in\Ff_1$ (isotropic in type (BCD)) such that $F_1\cap F_2$ has infinite codimension in $F_1$ and $F_2$,
then there is a sequence $\{F^{(n)}\}_{n\geq 0}$ of (respectively isotropic) subspaces which are $E$-commensurable with $F_2$ and satisfy
\[F_1\cap F^{(0)}\subsetneq F_1\cap F^{(1)}\subsetneq\cdots\subsetneq F_1\cap F^{(n)}\subsetneq\cdots.\]

\begin{proof}[Proof of Claim 1]
The sequence $\{F^{(n)}\}_{n\geq 0}$ is constructed by induction: we let $F^{(0)}=F_2$ and,
once $F^{(n)}$ is defined, we let $F^{(n+1)}$ be the subspace $F'$ obtained by applying Lemma \ref{lemma-2-T1}\,{\rm (a)} with $F=F^{(n)}$ and $M=F_1$ (each time $F_1\cap F^{(n)}$ still has infinite codimension in $F_1$ and $F^{(n)}$ hence the conditions
for applying Lemma \ref{lemma-2-T1}\,{\rm (a)} are 
fulfilled).
\end{proof}

{\bf Claim 2:} If $F_1\in\Ff_1$ (isotropic in type (BCD)) is such that $F_1\cap F_2$ has finite codimension in $F_1$ or $F_2$,
then there is a (respectively isotropic) subspace $F$ which is $E$-commensurable with $F_2$ and such that 
$F\subset F_1$ or $F_1\subset F$.

\begin{proof}[Proof of Claim 2]
Let $m_i:=\mathrm{codim}_{F_i}F_1\cap F_2$ and $m:=\min\{m_1,m_2\}$. By applying $m$ times Lemma \ref{lemma-2-T1}\,{\rm (a)} in the same way as in the proof of Claim 1, we obtain a (respectively isotropic) subspace $F$ which is $E$-commensurable with $F_2$ and such that
\[\mathrm{codim}_{F_1}F_1\cap F=0\quad\mbox{or}\quad\mathrm{codim}_{F}F_1\cap F=0,\]
that is, $F_1\subset F$ or $F\subset F_1$.
\end{proof}

{\bf Case 1} can now be adressed as follows. If there is $F_1\in\Ff_1$ (respectively isotropic) such that $F_1\cap F_2$ has infinite codimension in $F_1$ and $F_2$, then Claim 1 yields an infinite sequence $\{\Ff_{F^{(n)}}\}_{n\geq 0}$ of elements of $\G/\P_2$ which belong to pairwise distinct orbits of $\P_1$. Indeed, if there were $n<m$ with $g(\Ff_{F^{(n)}})=\Ff_{F^{(m)}}$ for some $g\in\P_1=\Stab_\G(\Ff_1)$, then we would have $g(F_1\cap F^{(n)})=F_1\cap F^{(m)}$, in contradiction with Lemma \ref{L-new} as the inclusion $F_1\cap F^{(n)}\subsetneq F_1\cap F^{(m)}$ is strict.

It remains to consider the case where $F_1\cap F_2$ has finite codimension in $F_1$ or $F_2$ for all (respectively isotropic) $F_1\in\Ff_1$. Invoking Claim 2, and using that $\Ff_1$ contains an infinite number of subspaces (see (\ref{5.1})), for all $n\geq 1$ we can find a sequence
\[
F_{1,0}\subsetneq F_{1,1}\subsetneq\cdots\subsetneq F_{1,n}
\]
of subspaces of $\Ff_1$ such that
\[
\forall k\in\{0,\ldots,n\},\ \exists \Ff_F\in\G/\P_2\ \mbox{such that}\ F_{1,k}\subset F
\]
or
\[
\forall k\in\{0,\ldots,n\},\ \exists \Ff_F\in\G/\P_2\ \mbox{such that}\ F\subset F_{1,k}.
\]
In the former case, by Lemma \ref{lemma-2-T1}\,{\rm (b)}, for each $k\in\{1,\ldots,n\}$ there
is a (respectively isotropic) subspace $F^{(k)}$, $E$-commensurable with $F_2$, such that $F_{1,k-1}\subset F^{(k)}$, $F_{1,k}\not\subset F^{(k)}$.
In the latter case, invoking Lemma \ref{lemma-2-T1}\,{\rm (c)}, for each $k\in\{1,\ldots,n\}$ we find this time $F^{(k)}$ such that $F^{(k)}\subset F_{1,k}$, $F^{(k)}\not\subset F_{1,k-1}$.
In both cases, the so obtained generalized flags $\Ff_{F^{(1)}},\ldots,\Ff_{F^{(n)}}\in \G/\P_2$ belong to pairwise distinct orbits of $\P_1$. Since $n$ is arbitrarily large, we conclude that there are infinitely many $\P_1$-orbits in $\G/\P_2$.

In {\bf Case 2}, we assume that $F_2\in\Ff_2$ has infinite dimension and infinite codimension in $V$,
and that there is $F_1\in\Ff_1$ with the same property (see (\ref{5.2})).
In type (BCD) these subspaces are also assumed to be isotropic. In the case where $F_1\cap F_2$ has infinite codimension in $F_1$ and $F_2$, Claim 1 yields infinitely many elements $\Ff_{F^{(n)}}\in\G/\P_2$ which belong to pairwise distinct $\P_1$-orbits.

It remains to consider the case where $F_1\cap F_2$ has finite codimension in $F_1$ or $F_2$. This implies that $F_1\cap F$ has infinite dimension and $F_1+F$ has infinite codimension in $V$ whenever $F$ is $E$-commensurable with $F_2$. 
By applying Lemma \ref{lemma-2-T1}\,{\rm (b)} with $(L,M)=(0,F_1)$, we get a sequence $\{F^{(n)}\}_{n\geq 0}$ of (respectively isotropic) subspaces which are $E$-commensurable with $F_2$ and satisfy
\[F_1\cap F^{(0)}\supsetneq F_1\cap F^{(1)}\supsetneq\cdots\supsetneq F_1\cap F^{(n)}\supsetneq\cdots.\]
Therefore, the associated generalized flags $\Ff_{F^{(n)}}$ (for $n\geq 0$) are points of $\G/\P_2$ that belong to pairwise distinct $\P_1$-orbits. We again conclude that $\G/\P_2$ has an infinite number of $\P_1$-orbits. The proof of the direct implication in Theorem \ref{T1} is now complete.

\section{Proof of the inverse implication in Theorem \ref{T1}}

\label{section-6}

We assume that $\P_1$ is large and $\P_2$ is semilarge. Hence assumption (\ref{all-but-one}) is fulfilled, and we can find an exhaustion
\[\X=\G/\P_1\times \G/\P_2=\bigcup_{n\geq 1} \X(n)\]
as in Section \ref{exhaustion}. By Lemma \ref{L-key}, we have inclusions of orbit sets
\[
\X(n)/\G(n)\hookrightarrow\X(n+1)/\G(n+1)\ \mbox{ for all $n\geq 1$}
\]
and the orbit set $\X/\G$ is the direct limit
\[
\X/\G=\lim_\to \X(n)/\G(n).
\]
To show that $\X$ has a finite number of $\G$-orbits, it is sufficient to estimate the number $s_n$ of $\G(n)$-orbits on $\X(n)$ and prove that the sequence $\{s_n\}_{n\geq 1}$ is bounded.

In type (BCD), $\X(n)$ is the set of ordered pairs of isotropic flags of a given type, in a finite-dimensional space $M$ endowed with the nondegenerate bilinear form $\omega$, and $\G(n)$ is the group $G(M,\omega)$ of transformations which preserve $\omega$. According to Lemma \ref{L3}, we have
$s_n\leq s_n^\mathrm{A}$ where $s_n^\mathrm{A}$ stands for the number of $\GL(M)$-orbits on the set of ordered pairs of (not necessarily isotropic) flags of the same type. Thus for showing that the sequence $\{s_n\}_{n\geq 1}$ is bounded, it suffices to show that the sequence $\{s_n^\mathrm{A}\}_{n\geq 1}$ is bounded. Therefore, it is enough to deal with the type-(A) case.

Since $\P_1$ is large, the generalized flag $\Ff_1$ is a finite chain
\[
\Ff_1=\{F_{1,0}=0\subset F_{1,1}\subset\ldots\subset F_{1,p_0-1}\subset F_{1,p_0}\subset\ldots\subset F_{1,p}=V\},
\]
such that $c_{1,k}:=\dim F_{1,k}/F_{1,k-1}$ is finite for all $k\not=p_0$.
The generalized flag $\Ff_2$ is a finite chain as $\P_2$ is semilarge. For $n\geq 1$ large, $\X(n)$ is a double flag variety of the form
\[
X:=\Fl(c_1,\ldots,c_p)\times \Fl(d_1,\ldots,d_q),
\]
whose elements are ordered pairs of flags
\[(F_0=0\subset F_1\subset\ldots\subset F_p=\span{E_n},F'_0=0\subset F'_1\subset \ldots\subset F'_q=\span{E_n})\]
such that $\dim F_k/F_{k-1}=c_k$ and $\dim F'_\ell/F'_{\ell-1}=d_\ell$ for all $k,\ell$.
We have $c_k>0$, $d_\ell>0$ and
\[c_1+\ldots+c_p=d_1+\ldots+d_q=m:=\dim\span{E_n}.\]
In addition, by choosing $n$ large, we may assume that  $c_k=c_{1,k}$ for all $k\not=p_0$, and thus $c_{p_0}=m-\sum_{k\not=p_0}c_{1,k}$. In particular, $c_k$ is independent of $n$ for all $k\not=p_0$.
We must show that the number $s_n$ of $G:=\GL(\span{E_n})$-orbits
on $X$ can be bounded by a constant which depends only on the numbers $c_k$ (for $k\not=p_0$), $p$, and $q$.
By the Bruhat decomposition, $s_n$ is the cardinality of the double coset
\[
\mathfrak{S}_{c_1}\times\cdots\times\mathfrak{S}_{c_p}\backslash\mathfrak{S}_m/\mathfrak{S}_{d_1}\times\cdots\times\mathfrak{S}_{d_q}.
\]

An element of the quotient $\mathfrak{S}_m/\mathfrak{S}_{d_1}\times\cdots\times\mathfrak{S}_{d_q}$ can be viewed as a map $\tau:\{1,\ldots,m\}\to\{1,\ldots,q\}$ such that $\tau^{-1}(j)$ has $d_j$ elements for all $j$. Every such map $\tau$ belongs to the 
$\mathfrak{S}_{c_1}\times\cdots\times\mathfrak{S}_{c_p}$-orbit of a map $\tau_0$ which is in addition nondecreasing on each interval $[\bar{c}_{k-1}+1,\bar{c}_k]$ with $\bar{c}_k:=c_1+\ldots+c_k$.
Such a map $\tau_0$ is completely determined by its restriction to $\{i\in[1,m]:i\leq \bar{c}_{p_0-1}\mbox{ or }i>\bar{c}_{p_0}\}$.
This restriction is a map
\[\{1,\ldots,\bar{c}_{p_0-1}\}\cup\{\bar{c}_{p_0}+1,\ldots,m\}\to \{1,\ldots,q\}\]
where the set on the left-hand side has $C:=\sum_{k\not=p_0}c_k$ elements. There are $q^C$ maps between these two sets. Therefore, we conclude that
\[s_n=|\mathfrak{S}_{c_1}\times\cdots\times\mathfrak{S}_{c_p}\backslash\mathfrak{S}_m/\mathfrak{S}_{d_1}\times\cdots\times\mathfrak{S}_{d_q}|\leq q^C.\]
The proof of the theorem is complete.

\section{Proof of Theorem \ref{T2}}

\label{section-7}

The last assertion in Theorem \ref{T2} is a consequence of the first part of the statement combined with the classification of triple flag varieties of finite type for classical groups established in the references \cite{MWZ1,MWZ2,Matsuki1,Matsuki2}. Details are given in Appendix \ref{appendix}. In this section we prove the first part of the theorem.


Let $\X=\X_1\times \X_2\times \X_3$, where each factor $\X_i=\G/\P_i$ ($i=1,2,3$) is the quotient by a splitting parabolic subgroup $\P_i\subset\G$ and can be viewed as an ind-variety of generalized flags (Section \ref{section-2.3}). Evidently, $\X$ can have a finite number of $\G$-orbits only if every product of two factors $\X_i\times \X_j$ ($1\leq i<j\leq 3$) has a finite number of $\G$-orbits. In view of Theorem \ref{T1}, this property holds only if all three parabolic subgroups $\P_1,\P_2,\P_3$ are semilarge and two of them are large. This justifies the first claim in Theorem \ref{T2}. 

In what follows, we assume that 
\[\mbox{$\P_1,\P_2$ are large and $\P_3$ is semilarge}.\]
In this way, condition (\ref{all-but-one}) is satisfied and we may consider the construction of Section \ref{exhaustion}. Namely, we may choose an admissible basis $E$ such that $\X_1,\X_2$, and $\X_3$ contain an element which is $E$-compatible.
Relying on a filtration $E=\bigcup_{n\geq 1} E_n$ as in Section \ref{exhaustion}, we get exhaustions
\[\G=\bigcup_{n\geq 1}\G(n)\quad\mbox{and}\quad\X=\bigcup_{n\geq 1} \X_1(n)\times \X_2(n)\times \X_3(n)\]
such that $\X_i(n)$ (for $i=1,2,3$) is a (finite-dimensional) flag variety for the algebraic group $\G(n)$.

We have to show that $\X$ has a finite number of $\G$-orbits if and only if $\X_1(n)\times \X_2(n)\times \X_3(n)$ has a finite number of $\G(n)$-orbits for all $n$. The direct implication follows from Lemma \ref{L-key}. In the rest of this section, we assume that
\begin{equation}
\label{8}
\mbox{$\X_1(n)\times \X_2(n)\times \X_3(n)$ has finitely many $\G(n)$-orbits for all $n$,}
\end{equation}
and we need to show that $\X$ has finitely many $\G$-orbits.

For every $n\geq 1$, we denote $V_n=\span{E_n}$ and $V'_n=\span{E\setminus E_n}$. Then
\[V=V_n\oplus V'_n.\]
In type (BCD), the subspaces $V_n$ and $V'_n$ are orthogonal with respect to the form $\omega$. Note that the restrictions of $\omega$ to $V_n$ and $V'_n$, respectively denoted by $\omega_n$ and $\omega'_n$, are nondegenerate. Let $\pi_n:V\to V_n$ and $\pi'_n:V\to V'_n$ be the projections determined by  the above decomposition.

Let $\G(V'_n)$ stand for the subgroup of elements $g\in\G$ such that 
$g(V'_n)= V'_n$ and $g(e)=e$ for all $e\in E_n$. Note that $\G(V'_n)$ can be viewed as a subgroup of $\GL(V'_n)$, and it is an ind-group of the same type as $\G$. 

We point out two preliminary facts.

\begin{lemma}
\label{L6}
Let $\P\subset \G$ be a large splitting parabolic subgroup. 
Then there is an integer $n_0\geq 1$ such that $\G(V_{n_0}')\subset \P$.
\end{lemma}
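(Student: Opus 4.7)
The plan is as follows. By largeness of $\P$, the generalized flag $\Ff$ with $\P=\Stab_\G(\Ff)$ is a finite chain
\[\Ff=\{0=F_0\subsetneq F_1\subsetneq\ldots\subsetneq F_m=V\}\]
with a unique index $k_0$ such that $\dim F_{k_0}/F_{k_0-1}=\infty$, so $F_{k_0-1}$ is finite dimensional and $F_{k_0}$ is finite codimensional in $V$. My plan is to choose $n_0$ so that simultaneously
\[F_{k_0-1}\subset V_{n_0}\quad\text{and}\quad V'_{n_0}\subset F_{k_0},\]
and then deduce the inclusion $\G(V'_{n_0})\subset\P$ by a direct verification: every $g\in\G(V'_{n_0})$ fixes $V_{n_0}$ pointwise and preserves $V'_{n_0}$, so for $j<k_0$ we have $F_j\subset F_{k_0-1}\subset V_{n_0}$ and hence $g(F_j)=F_j$, while for $j\geq k_0$ the inclusion $V'_{n_0}\subset F_{k_0}\subset F_j$ combined with $V=V_{n_0}\oplus V'_{n_0}$ yields $F_j=(F_j\cap V_{n_0})\oplus V'_{n_0}$, each of whose summands is $g$-stable.

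The first inclusion is automatic for all $n_0$ large enough since $F_{k_0-1}$ is finite dimensional. The main task is the second, which I would reduce to showing that $E\setminus F_{k_0}$ is a finite set: once that is known, any $n_0$ with $E_{n_0}\supset E\setminus F_{k_0}$ (in type (BCD), enlarged to its $i_E$-closure, which remains finite) forces $E\setminus E_{n_0}\subset F_{k_0}$ and hence $V'_{n_0}\subset F_{k_0}$. To establish the finiteness of $E\setminus F_{k_0}$, I would invoke Proposition~\ref{P1bis}: since $\P$ is large, $\Ff$ is weakly $E$-compatible, so there exists an admissible basis $E'$ with $E\setminus E'$ and $E'\setminus E$ finite such that $\Ff$ is $E'$-compatible. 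Then $F_{k_0}=\span{E'\cap F_{k_0}}$, and finite codimensionality of $F_{k_0}$ forces $E'\setminus F_{k_0}$ to be finite. The inclusion $E\setminus F_{k_0}\subset(E'\setminus F_{k_0})\cup(E\setminus E')$ then exhibits $E\setminus F_{k_0}$ as a finite set, as required.

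The main obstacle is precisely the passage from the intrinsic information about $\Ff$ to an explicit finiteness statement relative to the \emph{fixed} basis $E$. A priori, $\Ff$ need only be compatible with some admissible basis possibly remote from $E$, so the bridge provided by weak $E$-compatibility---itself a consequence of Proposition~\ref{P1bis}---is essential; once it is in place, the remaining verification is a routine decomposition argument.
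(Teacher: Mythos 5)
Your proof is correct and follows essentially the same route as the paper: choose $n_0$ so that $F_{k_0-1}\subset V_{n_0}$ and $V'_{n_0}\subset F_{k_0}$, using finite dimensionality of $F_{k_0-1}$, and weak $E$-compatibility (via Proposition~\ref{P1bis}) together with finite codimensionality of $F_{k_0}$ to see that all but finitely many vectors of $E$ lie in $F_{k_0}$. You in fact supply two details the paper leaves implicit — the precise justification that $E\setminus F_{k_0}$ is finite, and the final modularity check that the two containments force $g(F_j)=F_j$ for all $j$ — both of which are carried out correctly.
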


\begin{proof}
Since $\P$ is large, it is the stabilizer of a generalized flag
\[\Ff=\{F_0=0\subset F_1\subset\ldots\subset F_{p_0-1}\subset F_{p_0}\subset\ldots\subset F_p=V\}\]
such that $\dim F_k/F_{k-1}<+\infty$ for all $k\not=p_0$. In particular, $F_{p_0-1}$ is finite dimensional hence
there is $n_1\geq 1$ with $F_{p_0-1}\subset V_{n_1}$. 
By Proposition \ref{P1bis}, the generalized flag $\Ff$ is weakly $E$-compatible.
Since $F_{p_0}$ is finite codimensional, we have $e\in F_{p_0}$ for all but finitely many vectors $e\in E$, hence there is $n_2\geq 1$ with $F_{p_0}\supset V'_{n_2}$. Then the integer $n_0:=\max\{n_1,n_2\}$ is as required in the statement.
\end{proof}

\begin{lemma}
\label{L6bis}
Let $\P\subset \G$ be a semilarge splitting parabolic subgroup which contains the splitting Cartan subgroup $\H(E)$.
For all $m\geq 1$, there is an integer $n_1\geq 1$ such that for every $m$-dimensional subspace $M\subset V$, we can find
an element $g\in\P$ with $g(M)\subset V_{n_1}$.
\end{lemma}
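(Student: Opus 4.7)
My plan is to use the Levi structure of $\P$. Since $\P$ is semilarge and contains $\H(E)$, Proposition \ref{P0} gives $\P=\Stab_\G(\Ff)$ for a finite $E$-compatible generalized flag $\Ff=\{0=F_0\subsetneq F_1\subsetneq\cdots\subsetneq F_s=V\}$, with $F_k^\perp=F_{s-k}$ in type (BCD). Setting $E^{(k)}:=E\cap(F_k\setminus F_{k-1})$ and $V^{(k)}:=\span{E^{(k)}}$ yields a direct sum decomposition $V=\bigoplus_k V^{(k)}$ with $F_k=V^{(1)}\oplus\cdots\oplus V^{(k)}$. Let $p^{(k)}:V\to V^{(k)}$ denote the associated projection. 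In type (BCD), isotropicity of $\Ff$ forces $i_E$ to exchange $E^{(k)}$ with $E^{(s-k+1)}$, so $\omega$ induces a perfect pairing between $V^{(k)}$ and $V^{(s-k+1)}$ when $k\neq(s+1)/2$, and a nondegenerate form on $V^{(k)}$ itself when $k=(s+1)/2$ (which can occur only for $s$ odd). Given an $m$-dimensional $M\subset V$, each $A_k:=p^{(k)}(M)$ has dimension at most $m$.

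I will look for an element $g$ in the Levi subgroup of $\P$ that acts on each $V^{(k)}$ by some $l_k$ (with $l_k$ determining the action on $V^{(s-k+1)}$ via $\omega$-duality in type (BCD)) and sends each $A_k$ into $V_{n_1}\cap V^{(k)}$. Such a $g$ automatically lies in $\P$, and satisfies $g(M)\subset\bigoplus_k(V_{n_1}\cap V^{(k)})=V_{n_1}$. First I would fix $n_1$ large enough that $V^{(k)}\subset V_{n_1}$ whenever $V^{(k)}$ is finite dimensional and $\dim(V_{n_1}\cap V^{(k)})\geq m$ whenever $V^{(k)}$ is infinite dimensional; only finitely many $k$ are involved, and in type (BCD) the set $E_{n_1}$ can be taken $i_E$-stable as in Section~\ref{section-1.1}, so the identity $V_{n_1}=\bigoplus_k(V_{n_1}\cap V^{(k)})$ holds.

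In type (A) the construction of $l_k$ is straightforward: the finitary general linear group on a countable-dimensional space acts transitively on finite-dimensional subspaces of each fixed dimension, the proof being to complete a basis of $A_k$ to an admissible basis of $V^{(k)}$ differing from $E^{(k)}$ by finitely many vectors, map these basis vectors to prescribed elements of $E_{n_1}\cap V^{(k)}$, and extend by the identity on the remaining basis vectors. The hard part will be the coupled case in type (BCD): the single element $l_k\in\GL(V^{(k)})\cap\G$ must satisfy both $l_k(A_k)\subset V_{n_1}\cap V^{(k)}$ and $(l_k^t)^{-1}(A_{s-k+1})\subset V_{n_1}\cap V^{(s-k+1)}$. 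The key input here is the classical fact that $\GL(V^{(k)})$-orbits on pairs $(A,B)\in\mathrm{Gr}(V^{(k)})\times\mathrm{Gr}(V^{(s-k+1)})$ of fixed dimensions are parametrized by the single additional invariant $\mathrm{rank}(\omega|_{A\times B})$, which is bounded by $m$. Because $V_{n_1}\cap V^{(k)}$ and $V_{n_1}\cap V^{(s-k+1)}$ are $\omega$-dual via the $i_E$-pairing of admissible basis vectors (cf.~Remark~\ref{R2.2}(c)), every such orbit class can be realized by a pair inside $(V_{n_1}\cap V^{(k)},V_{n_1}\cap V^{(s-k+1)})$ once $n_1$ is large enough, and a finitary $l_k$ transporting $(A_k,A_{s-k+1})$ to this target will be built over a finite-dimensional ambient subspace and extended by the identity. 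The self-dual middle factor (when $s$ is odd) is handled analogously with the finitary orthogonal or symplectic group on $V^{((s+1)/2)}$, using a Witt-type argument at the finite-dimensional level.
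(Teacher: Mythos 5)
Your plan is correct, and while it shares with the paper the basic idea of working inside the Levi part of $\P$ (the subgroup preserving each $\span{E^{(k)}}$), the implementation is genuinely different. The paper reduces $M$ to an $m$-tuple of spanning vectors, splits each vector into its $p$ Levi components, and pushes these $mp$ vectors into a fixed $V_n$ one at a time by an iteration (whence its bound $n_1=\varphi^{mp}(0)$); the only input needed is that every orbit of each finitary Levi factor on vectors meets a fixed finite level of the exhaustion. You instead treat the projections $A_k=p^{(k)}(M)$ as whole subspaces, which handles $M$ in one step but requires stronger transitivity inputs: the classification of $\GL$-orbits on pairs of subspaces in duality (complete invariants $\dim A$, $\dim B$, $\mathrm{rank}\,\omega|_{A\times B}$) for the paired factors, and Witt's theorem for the self-dual middle factor. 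Since all these invariants are bounded by $m$, a single $n_1$ — with $\dim(V_{n_1}\cap V^{(k)})\geq 2m$ on the infinite-dimensional factors so that every admissible value of the rank invariant is realizable inside the target, and with $E_{n_1}$ taken $i_E$-stable in type (BCD) so that $V_{n_1}\cap V^{(k)}$ and $V_{n_1}\cap V^{(s-k+1)}$ stay perfectly paired — works uniformly in $M$, as the statement requires. The one point to make fully explicit in a write-up is that each $l_k$ must be finitary with respect to $E^{(k)}$, obtained as you indicate by carrying out the construction inside $\span{I}$ for a suitable finite $I\subset E^{(k)}$ (with $i_E(I)$ on the dual side) and extending by the identity, so that the assembled block-diagonal element actually lies in $\G$ rather than in an abstract product of general linear groups. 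Your route is heavier in type (BCD) but avoids the paper's iteration and yields an essentially explicit, cleaner dependence of $n_1$ on $m$.
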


\begin{proof}
Being semilarge, $\P$ is the stabilizer of a generalized flag
\[
\Ff=\{F_0=0\subset F_1\subset\ldots\subset F_p=V\}.
\]
This generalized flag being $E$-compatible,
for all $k\in\{1,\ldots,p\}$ we have a subset $E'_k\subset E$ with $F_k=F_{k-1}\oplus\span{E'_k}$.
The subgroup
\[
\P':=\{g\in\P:g(\span{E'_k})=\span{E'_k}\ \mbox{for all $k\in\{1,\ldots,p\}$}\}=\P\cap\prod_{k=1}^p\GL(\span{E'_k})
\]
acts on the set of vectors
$\bigcup_{k=1}^p\span{E'_k}$
with finitely many orbits, hence there is an integer $\varphi(0)\geq 1$ with
\[
\bigcup_{k=1}^p\span{E'_k}\subset \P'\cdot V_{\varphi(0)}.
\]
For all $n\geq 1$, by applying the same property to the intersection $\P\cap\G(V'_n)$, which is a semilarge splitting parabolic subgroup of $\G(V'_n)$ that contains $\H(E\setminus E_n)$, we get an integer $\varphi(n)>n$ satisfying
\[
\bigcup_{k=1}^p\span{E'_k\cap V'_n}\subset (\P'\cap \G(V'_n))\cdot V_{\varphi(n)}.
\]
By induction, this easily implies that $(\bigcup_{k=1}^p\span{E'_k})^\ell\subset \P'\cdot (V_{\varphi^\ell(0)})^\ell$ for all $\ell\geq 1$. 
Hence
\[V^m=\Big(\bigoplus_{k=1}^p\span{E'_k}\Big)^m\subset \P'\cdot (V_{n_1})^{m}\subset \P\cdot (V_{n_1})^{m}\]
where $n_1:=\varphi^{mp}(0)$.
The lemma ensues.
\end{proof}

By Theorem \ref{T1}, $\X_2\times \X_3$ has a finite number of $\G$-orbits. For showing that $\X=\X_1\times\X_2\times\X_3$ has a finite number of $\G$-orbits,
it suffices to show that $\G$ has a finite number of orbits on $\X_1\times\mathbb{O}$ for every $\G$-orbit $\mathbb{O}\subset\X_2\times\X_3$.
To do this, we fix an element $(\Ff_2,\Ff_3)\in\mathbb{O}\subset\X_2\times\X_3$ and consider its stabilizer
\[\mathbf{S}:=\{g\in\G:g(\Ff_2)=\Ff_2\mbox{ and }g(\Ff_3)=\Ff_3\}=\Stab_\G(\Ff_2)\cap\Stab_\G(\Ff_3).\]
It suffices to show that $\mathbf{S}$ has a finite number of orbits on $\X_1$.

Fix $n_0\geq 1$, large enough so that $\Ff_2$ and $\Ff_3$ belong to $\X_2(n_0)$ and $\X_3(n_0)$, respectively, and such that
\[\G(V'_{n_0})\subset\Stab_\G(\Ff_2)\]
(see Lemma \ref{L6}). Then
\[\mathbf{S}':=\mathbf{S}\cap\G(V'_{n_0})=\Stab_\G(\Ff_3)\cap \G(V'_{n_0}).\]
Since $\Ff_3$ belongs to $\X_3(n_0)$, the chain of subspaces
\[\Ff_3(V'_{n_0}):=\{F\cap V'_{n_0}:F\in\Ff_3\}\]
is an $(E\setminus E_{n_0})$-compatible generalized flag in the space $V'_{n_0}$.
Note that in type (BCD) this generalized flag is $\omega'_{n_0}$-isotropic.
Moreover, the fact that $\Ff_3$ belongs to $\X_3(n_0)$ guarantees that each subspace $F\in\Ff_3$ satisfies $F=(F\cap V_{n_0})\oplus(F\cap V'_{n_0})$. 
We deduce that
\begin{eqnarray*}
\mathbf{S}' & = & \Stab_\G(\Ff_3)\cap\G(V'_{n_0}) \\ & = & \{g\in \G(V'_{n_0}):g(F)=F\ \mbox{for all $F\in\Ff_3$}\} \\
 & = & \{g\in \G(V'_{n_0}):g(F)=F\ \mbox{for all $F\in\Ff_3(V'_{n_0})$}\} \\
 & = & \Stab_{\G(V'_{n_0})}(\Ff_3(V'_{n_0})).
\end{eqnarray*}
Consequently, $\mathbf{S}'$ is a semilarge splitting parabolic subgroup of $\G(V'_{n_0})$ that contains the splitting Cartan subgroup $\H(E\setminus E_{n_0})$.
This key observation is used in the proof of Claims 2 and 3 below.

By (\ref{8}) we know that for every $n\geq n_0$, the (finite-dimensional) subvariety $\X_1(n)$ intersects only finitely many $\mathbf{S}$-orbits.
For completing the proof of Theorem \ref{T2}, it suffices to prove the following claim.

\medskip
{\bf Claim 1:} There is $n_1\geq n_0$ such that, for all $\Ff\in\X_1$, there is $g\in \mathbf{S}$ with  $g\Ff\in\X_1(n_1)$.

\medskip
The combination of Claims 2 and 3 below yields Claim 1, and will make the proof of the theorem complete.

Since $\P_1$ is a large parabolic subgroup, every point $\Ff\in\X_1$ is a finite flag of the form
\[\Ff=\{F_0=0\subset F_1\subset \ldots\subset F_{p_0-1}\subset F_{p_0}\subset\ldots\subset F_p=V\}\]
with $\dim F_k=d_k<+\infty$ for $0\leq k\leq p_0-1$ and $\dim V/F_k=d'_k<+\infty$ for $p_0\leq k\leq p$. 

\medskip
{\bf Claim 2:} There exists $n_1\geq n_0$ such that, for all $\Ff=\{F_0,\ldots,F_p\}\in\X_1$, there is $g\in\mathbf{S}\cap\G(V'_{n_0})$ with $g(F_{p_0-1})\subset V_{n_1}$.

\medskip
{\bf Claim 3:} There exists $n_2\geq n_1$ such that, for all $\Ff=\{F_0,\ldots,F_p\}\in\X_1$ satisfying the condition $F_{p_0-1}\subset V_{n_1}$, there is $g\in\mathbf{S}\cap\G(V'_{n_1})$ with $g(F_k)\supset V'_{n_2}$ for $p_0\leq k\leq p$. This implies that $g\Ff\in\X_1(n_2)$.

\begin{proof}[Proof of Claim 2]
If $\Ff\in\X_1$, then $\pi'_{n_0}(F_{p_0-1})$ is a subspace of $V'_{n_0}$ whose dimension is at most $d_{p_0-1}$.
We have noted that $\mathbf{S}'=\mathbf{S}\cap\G(V'_{n_0})$ is a semilarge splitting parabolic subgroup of $\G(V'_{n_0})$ which
contains $\H(E\setminus E_{n_0})$. 
Hence, Lemma \ref{L6bis} yields an integer $n_1\geq n_0$
such that for every $\Ff\in\X_1$ there is an element $g\in\mathbf{S}'$ with $g(\pi'_{n_0}(F_{p_0-1}))\subset V_{n_1}$.
Since $g\circ \pi'_{n_0}=\pi'_{n_0}\circ g$, we obtain that $\pi'_{n_0}(g(F_{p_0-1}))\subset V_{n_1}$, and hence that $g(F_{p_0-1})\subset V_{n_1}$. This establishes Claim~2.
\end{proof}

\begin{proof}[Proof of Claim 3]
Let $\tilde\X_1$ be the subset of generalized flags $\Ff=\{F_k\}_{k=0}^p\in\X_1$ such that $F_{p_0-1}\subset V_{n_1}$. 
For $\Ff=\{F_k\}_{k=0}^p\in\tilde\X_1$ we define
\[
\Ff(V'_{n_1}):=\{F_0\cap V'_{n_1}\subset F_1\cap V'_{n_1}\subset\ldots\subset F_p\cap V'_{n_1}\}.
\]
This is a weakly $(E\setminus E_{n_1})$-compatible generalized flag in the space $V'_{n_1}$, that is $\omega'_{n_1}$-isotropic in type (BCD). Moreover, we have
\[\dim F_k\cap V'_{n_1}\leq \dim F_{p_0-1}=d_{p_0-1}<+\infty\]
for $0\leq k\leq p_0-1$ and
\[\dim V'_{n_1}/F_k\cap V'_{n_1}\leq \dim V/F_{p_0}=d'_{p_0}<+\infty\]
for $p_0\leq k\leq p$. These observations show that the image of the map $\Ff\in\tilde{\X}_1\mapsto\Ff(V'_{n_1})$ is contained in a union $\tilde\X'_1\cup\ldots\cup\tilde\X'_r$ where each $\tilde\X'_j$ is an ind-variety of generalized flags in the space $V'_{n_1}$ corresponding to a large splitting parabolic subgroup of $\G(V'_{n_1})$.

Since $n_1\geq n_0$, arguing in the same way as for $\mathbf{S}'$, we see that the subgroup
\[\tilde{\mathbf{S}}':=\mathbf{S}\cap\G(V'_{n_1})\]
is a semilarge splitting parabolic subgroup of $\G(V'_{n_1})$. By Theorem \ref{T1}, $\tilde{\mathbf{S}}'$ has a finite number of orbits on every $\tilde\X'_j$. It follows that the set
\[\{\Ff(V'_{n_1}):\Ff\in\tilde\X_1\}\]
intersects finitely many $\tilde{\mathbf{S}}'$-orbits. Hence we can find $n_2\geq n_1$ such that for every $\Ff=\{F_0,\ldots,F_p\}\in\tilde\X_1$, there is $g\in\tilde{\mathbf{S}}'$ with $g(F_{p_0}\cap V'_{n_1})\supset V'_{n_2}$. Whence $g(F_{p_0})\supset V'_{n_2}$. 

The conclusion that $g\Ff=\{g(F_k)\}_{k=0}^p$ belongs to $\X_1(n_2)$ is obtained by observing that
\[
g(F_k)
=
\left\{
\begin{array}{ll}
F_k\subset V_{n_2} & \mbox{if $0\leq k\leq p_0-1$,} \\
(g(F_k)\cap V_{n_2})\oplus V'_{n_2} & \mbox{if $p_0\leq k\leq p$.}
\end{array}
\right.
\]
The proof of Claim 3 is complete.
\end{proof}


\section{Proof of Theorem \ref{T3}}

\label{section-8}

Let $\ell\geq 4$. 
Theorem \ref{T1} implies that $\X$ has infinitely many $\G$-orbits whenever at least two of the splitting parabolic subgroups $\P_1,\ldots,\P_\ell$ are not large.
Hence we may assume that $\P_1,\ldots,\P_{\ell-1}$ are large and consider the construction of Section \ref{exhaustion}.

Since $\ell\geq 4$, it follows from the results in \cite{MWZ1,MWZ2,Matsuki1,Matsuki2} that every (finite-dimensional) multiple flag variety $\X(n)$ has infinitely many $\G(n)$-orbits whenever $n\geq 1$ is large enough. By Lemma \ref{L-key}
we infer that $\X$ has infinitely many $\G$-orbits, completing the proof of Theorem \ref{T3}.

\begin{appendix}

\section{On the classification given in Table \ref{table1}}
\label{appendix}

In this appendix we go over the details of the classification of triples of (proper) splitting parabolic subgroups $(\P_1,\P_2,\P_3)$ of $\G$ such that the ind-variety
\[\X=\G/\P_1\times\G/\P_2\times\G/\P_3\]
has a finite number of $\G$-orbits,
stated in the last part of Theorem \ref{T2} and explicitly listed in Table \ref{table1}. 
The first part of Theorem \ref{T2} reduces this classification to the finite-dimensional case.
This enables us to use the classification results for triple flag varieties of finite type in \cite{MWZ1,MWZ2,Matsuki1,Matsuki2}.
Note that these latter results are not used elsewhere in our paper.

According to the first claim in Theorem \ref{T2}, we assume that for each $i\in\{1,2,3\}$, $\P_i$ is a semilarge parabolic subgroup obtained as the stabilizer of a generalized flag
\[\Ff_i=\{F_{i,0}=0\subsetneq F_{i,1}\subsetneq\ldots\subsetneq F_{i,m_i-1}\subsetneq F_{i,m_i}=V\}\]
(such that $F_{i,k}^\perp=F_{i,m_i-k}$ in type (BCD)).
In the notation of Table \ref{table1} we have
$\length{\P_i}=m_i$, while $\Lambda(\P_i)$ is the list of the dimensions $\dim F_{i,k}/F_{i,k-1}$ (for $k\in\{1,\ldots,m_i\}$) written in nonincreasing order. Some of these dimensions may be infinite, in which case the sequence $\Lambda(\P_i)$ takes the form
\[
\Lambda(\P_i)=(\infty^{\ell_i},c_{i,\ell_i+1},\ldots,c_{i,m_i})
\]
with $\ell_i\in\{1,\ldots,m_i\}$ and a nonincreasing sequence of integers $c_{i,\ell_i+1}\geq\ldots\geq c_{i,m_i}$. Moreover, by the first claim in Theorem \ref{T2}, we assume that at least two of the parabolic subgroups are large, i.e., $\ell_i=1$
for at least two $i\in\{1,2,3\}$.

In the setting of Theorem \ref{T2}, the ind-variety $\X$ is exhausted by finite-dimensional triple flag varieties $\X(n):=\X(E_n)=\prod_{i=1}^3\G(E_n)/\P_i(E_n)$,
where $\G(n):=\G(E_n)$ is a finite-dimensional classical algebraic group of the same type as $\G$ and $\P_i(n):=\P_i(E_n)$ is the stabilizer of a flag
\[\Ff_i(n)=\{F_{i,0}(n)=0\subset F_{i,1}(n)\subset\ldots\subset F_{i,m_i-1}(n)\subset F_{i,m_i}(n)=V_n\}\]
of the finite-dimensional space $V_n=\span{E_n}$.
When $n$ is large enough, 
the list of dimensions $\dim F_{i,k}(n)/F_{i,k-1}(n)$ arranged in nonincreasing order is of the form
\[
\Lambda(\P_i(n))=(d_{i,1}(n),\ldots,d_{i,\ell_i}(n),c_{i,\ell_i+1},\ldots,c_{i,m_i})
\]
where, for all $k\in\{1,\ldots,\ell_i\}$, $\{d_{i,k}(n)\}_n$ is a sequence tending to infinity.

The first part of Theorem \ref{T2} asserts that $\X$ has a finite number of $\G$-orbits if and only if $\X(n)$ has a finite number of $\G(n)$-orbits for all $n$
(in fact, by Lemma \ref{L-key} we may assume that $n$ is large). The latter condition can be characterized in terms of the sequences $\Lambda(\P_i(n))$ by using results from \cite{MWZ1,MWZ2,Matsuki1,Matsuki2}. We now complete the verification of Table~\ref{table1} case by case.

\subsection{$\G=\GL(\infty)$} 
In type (A), it is shown in \cite[Theorem 2.2]{MWZ1} that $\X(n)$ has a finite number of $\G(n)$-orbits if and only if one of the following conditions hold up to permutation within the triple $(\P_1,\P_2,\P_3)$:
\begin{itemize}
\item $m_1=m_2=2$ (referred to as type $(D_{r+2})$ in \cite[Theorem 2.2]{MWZ1});
\item $m_1=2$, $m_2=3$, $m_3\in\{3,4,5\}$ (types $(E_6)$, $(E_7)$, $(E_8)$);
\item $m_1=2$, $m_2=3$, and $c_{1,2}=2$ (type $(E_{r+3}^{(a)})$);
\item $m_1=2$, $m_2=3$, and $c_{2,3}=1$ (type $(E_{r+3}^{(b)})$);
\item $m_1=2$ and $c_{1,2}=1$ (type $(S_{q,r})$).
\end{itemize}
Note that type $(A_{q,r})$ of \cite[Theorem 2.2]{MWZ1} does not occur in our setting since we assume that $\P_1,\P_2,\P_3$ are proper subgroups of $\G$, and hence $m_1,m_2,m_3\geq 2$. The above list of conditions yields the part of Table \ref{table1} concerning $\GL(\infty)$.

\subsection{$\G=\Sp(\infty)$} 

In this case the requirement $F_{i,k}^\perp=F_{i,m_i-k}$ for all $k\in\{1,\ldots,m_i\}$ implies that, if $m_i$ is even then the generalized flag $\Ff_i$ contains a Lagrangian subspace $F_{i,\frac{m_i}{2}}$ which is both infinite dimensional and infinite codimensional. Hence,
\begin{equation}
\mbox{if $m_i$ is even, then $\P_i$ is not large.}
\end{equation}
Since we assume that at least two of the parabolic subgroups $\P_1,\P_2,\P_3$ are large, at least two of the numbers $m_1,m_2,m_3$ must be odd. Taking this observation into account, it follows from \cite[Theorem 1.2]{MWZ2} that $\X(n)$ has a finite number of $\G(n)$-orbits if and only if one of the following cases occurs (up to permutation within $(\P_1,\P_2,\P_3)$):
\begin{itemize}
\item $m_1=2$, $m_2=3$, $m_3\in\{3,5\}$ (types $(\mathrm{Sp}E_6)$ and $(\mathrm{Sp}E_8)$ in \cite[Theorem 1.2]{MWZ2});
\item $m_1=2$, $m_2=3$, and $c_{2,2}=c_{2,3}=1$ (type $(\mathrm{Sp}E_{r+3}^{(b)})$);
\item $m_1=m_2=3$, and $c_{1,2}=c_{1,3}=1$ (type $(\mathrm{Sp}Y_{r+4})$).
\end{itemize}
This corresponds to the part of Table \ref{table1} concerning $\Sp(\infty)$.

\subsection{$\G=\O(\infty)$} Similarly to the case of $\Sp(\infty)$, the parabolic subgroup $\P_i$ is large only if $m_i$ is odd, and
the fact that at least two of the subgroups $\P_1,\P_2,\P_3$ are large implies that at least two of the numbers $m_1,m_2,m_3$ are odd.
Note that, if some $m_i$ is even then the finite-dimensional space $V_n=\span{E_n}$ has even dimension since it contains a Lagrangian subspace. 

If $\dim V_n$ is odd, say $\dim V_n=2m+1$, then \cite[Theorem 1.6]{Matsuki1} implies that $\X(n)$ has a finite number of $\G(n)$-orbits if and only if one of the following situations occurs (up to permutation within $(\P_1,\P_2,\P_3)$): 
\begin{itemize}
\item $m_1=m_2=3$ and $c_{1,2}=c_{1,3}=1$ (this corresponds to (II) in \cite[Theorem 1.6]{Matsuki1});
\item $m_1=m_2=3$, $m_3\in\{3,5\}$, and $\Lambda(\P_2(n))=(m,m,1)$, which means that $\Lambda(\P_2)=(\infty,\infty,1)$  (cases (III) and (IV) of \cite[Theorem 1.6]{Matsuki1}).
\end{itemize}
In the second situation $\P_2$ is not large, hence $\P_1,\P_3$ have to be large, in particular  $\Lambda(\P_1)=(\infty,b,b)$ for some positive integer $b$. Note that condition (I) of \cite[Theorem 1.6]{Matsuki1} cannot be fulfilled here because $\P_1$ or $\P_2$ must be large.

If $\dim V_n$ is even, say $\dim V_n=2m$, then we get from \cite[Theorem 1.7]{Matsuki2} that  $\G(n)$ has a finite number of orbits on $\X(n)$ precisely in the following cases (up to permutation within $(\P_1,\P_2,\P_3)$):
\begin{itemize}
\item $m_1=3$, $m_2\in\{2,3,4\}$, and $c_{1,2}=c_{1,3}=1$ (cases (I-1) and (I-2) of \cite[Theorem 1.7]{Matsuki2});
\item $m_1=2$ and $\Lambda(\P_2(n))$ is one of the sequences $(2m-2,1^2)$, $(2m-4,2^2)$, $(2m-6,3^2)$, $(2m-4,1^4)$, which equivalently means that $\Lambda(\P_2)$ is $(\infty,b^2)$ with $b\in\{1,2,3\}$ or $(\infty,1^4)$ (case (II) of \cite[Theorem 1.7]{Matsuki2});
\item $m_1=2$, $m_2=3$ with $\Lambda(\P_2(n))=(2m-2b,b^2)$ for an integer $b\geq 4$, which means that $\Lambda(\P_2)=(\infty,b^2)$, and $m_3\in\{3,5\}$ (cases (III-1) and (III-2) of \cite[Theorem 1.7]{Matsuki2});
\item $m_1=2$, $m_2=3$ with $\Lambda(\P_2(n))=(2m-2b,b^2)$ for an integer $b\geq 4$, which means that $\Lambda(\P_2)=(\infty,b^2)$, and $\Lambda(\P_3(n))$ is $(2m-2c-4,c^2,1^4)$ with $c\geq 1$ or ($2m-8,1^8)$, which means that $\Lambda(\P_3)$ is $(\infty,c^2,1^4)$ or $(\infty,1^8)$ (cases (III-3) and (III-4) of \cite[Theorem 1.7]{Matsuki2}).
\end{itemize}
Note that \cite[Theorem 1.7]{Matsuki2} contains more cases, but we disregard the cases which prevent two of the subgroups $\P_1,\P_2,\P_3$ to be large.

Altogether, the conditions listed in this subsection yield the part of Table \ref{table1} concerning $\O(\infty)$.
\end{appendix}

\end{document}